\newcommand{\PP}{\mathbb{P}}
\newcommand{\EE}{{\mathbb{E}}}
\newcommand{\lle}{\, \, {\lesssim}\, \, }
\newcommand{\FF}{\mathcal{F}}
\newcommand{\A}{\mathcal{A}}
\newcommand{\RR}{\mathbb{R}}
\newcommand{\NN}{\mathbb{N}}
\newcommand{\BB}{\mathcal{B}}
\newcommand{\TT}{\mathcal{T}}
\newcommand{\XX}{\mathcal{X}}
\newcommand{\YY}{\mathcal{Y}}
\newcommand{\ZZ}{\mathbb{Z}}
\newcommand{\br}[1]{\left[#1 \right]}
\newcommand{\ind}[1]{\mathbbm{1}_{#1}}
\newcommand{\MMM}{\mathcal{M}}
\newcommand{\NNN}{\mathcal{N}}
\newcommand{\given}{\, |\, }
\newcommand{\llogn}{\lfloor \log n\rfloor}
\newcommand{\cG}{\ensuremath{\mathcal G}}
\newcommand{\cF}{\ensuremath{\mathcal F}}
\newcommand{\reals}{\ensuremath{{\mathbb R}}}
\newcommand{\naturals}{\ensuremath{{\mathbb N}}}
\newcommand{\MMLE}{\hat\mu^{\text{MLE}}}
\theoremstyle{plain}
\newtheorem{thm}{Theorem}[section]
\newtheorem{lem}{Lemma}[section]
\newtheorem{prop}[thm]{Proposition}
\newtheorem{cor}[thm]{Corollary}
\newtheorem{assumption}{Assumption}
\theoremstyle{remark}
\newtheorem{rem}[thm]{Remark}
\newtheorem{df}[thm]{Definition}
\begin{document}

\begin{frontmatter}
\title{Minimax rates without the fixed sample size assumption}
\runtitle{Minimax rates without the fixed $n$ assumption}

\begin{aug}
\author[a]{\fnms{Alisa} \snm{Kirichenko}\ead[label=e1]{alice.kirichenko@warwick.ac.uk}}
\and
\author[b]{\fnms{Peter} \snm{Gr\"{u}nwald}\ead[label=e2]{peter.grunwald@cwi.nl}}

\address[a]{Department of Statistics, 
University of Warwick, 
\printead{e1}}


\address[b]{CWI and Mathematical Institute, Leiden University,
\printead{e2}}

\runauthor{Kirichenko and Grunwald}

\end{aug}

\begin{abstract}
We generalize the notion of minimax convergence rate. In contrast to
the standard definition, we do not assume that the sample size is
fixed in advance. Allowing for varying sample size results in {\em
 time-robust\/} minimax rates and estimators. These can be either strongly
adversarial, based on the worst-case over all sample sizes, or weakly
adversarial, based on the worst-case over all stopping times. We
show that standard and time-robust rates usually differ by at most a
logarithmic factor, and that for some (and we conjecture for all)
exponential families, they differ by exactly an iterated logarithmic
factor. In many situations, time-robust rates are arguably more
natural to consider. For example, they allow us to simultaneously
obtain strong model selection consistency and optimal estimation
rates, thus avoiding the ``AIC-BIC dilemma''.
\end{abstract}
%

\end{frontmatter}
\thispagestyle{empty}
\numberwithin{equation}{section}
\numberwithin{equation}{section}
\section{Introduction}

Minimax rates are an essential tool for evaluation and comparison of
estimators in a wide variety of applications. Classic references on
the topic include, among many others, \cite{tsybakov}, 
\cite{wasserman} and \cite{vdv}. For a fixed sample size $n$, the
standard minimax rate is computed by first taking the supremum of the
expected loss over all parameters (distributions) in the model for
each estimator, and then minimizing this value over all possible
estimators. Here, we consider a natural extension of this setting in
which data comes in sequentially and one does not know $n$ in advance:
instead of considering $n\geq 1$ fixed, we include it in the
worst-case analysis.

At first it may seem that such time-robustness trivializes the
problem: a naive approach would be to take $n$ as a parameter just
like the distribution and compute the supremum of the expected loss
over all sample sizes and all distributions in the model.
In most cases the supremum would then be trivially attained for sample
size one, since the precision of an
estimator tends to get better with the increase of the sample
size. Therefore, another approach has to be taken. We manage to give
meaningful definitions by rewriting the standard definition in terms of a ratio. The precise new definitions, given in Section
\ref{sec:new_def}, come in two forms: {\em weakly adversarial}, in
which we take the $\sup$ (worst-case) over all stopping times; and
{\em strongly adversarial}, in which we take the $\sup$ over all
sample sizes. In general, the weakly adversarial minimax rate cannot
be larger than the strongly adversarial one. The weakly adversarial
setting corresponds to what has recently been called the {\em always
 valid\/} (sometimes also ``anytime-valid'') setting for confidence
intervals and testing \citep{howard}: at any point in time $n$, Nature
can decide whether or not to stop generating data and present the data
so far for analysis, using a rule that can take into account both past
data and the true distribution. This can be seen as a form of minimax
analysis under `optional stopping'. Note however that in the standard
interpretation (e.g. in the Bayesian literature) of optional stopping, 
stopping rules are assumed independent of the underlying distribution
$\PP_{\theta}$, whereas here Nature is more powerful, her stopping
rule being allowed to depend on $\theta$. Nature is even more powerful
in the second, strongly adversarial setting that we consider. Here, 
Nature can be thought of as generating a very large sample of data and
then simply producing the $n$ so that the initial sub-sample up to
size $n$ is as misleading as possible. While one may argue which of
these two settings is more appropriate, our initial results show that
in some cases they lead to the same rates, and we conjecture that the
rates coincide more generally.

\smallskip
\noindent{\bf Motivation.}
One advantage of the new definitions is that, in some contexts, they
may be more natural. Of course, minimax approaches are truly optimal
in the zero-sum game setting in which Statistician plays against
Nature, Nature being a player that actively determines the parameters
of the problem in an adversarial manner. In practice, one is
interested in minimax estimators and rates not because one really
thinks that Nature will actually be adversarial in this way, but
simply because one wants to be robust against whatever might
happen. But if one wants to be robust against whatever might happen, 
then it seems natural to be robust not just for all parameters, but
also for sample sizes: in modern practice, the data analyst is often
presented with a fixed sample of a particular size, and she has no
control whatsoever on how exactly that sample size was
determined. Time-robust minimax optimal estimators are robust in this
situation --- one might of course argue `time will not be determined
by an adversary!' but this is no different from arguing `the true
$\theta$ will not be determined by an adversary!': once one takes a
worst-case approach at all, it makes sense to include time as
well. Moreover, even in the setting of controlled experiments such as
clinical trials, where the statistician is normally supposed to determine the sample
size in advance, early stopping and the like might happen for
reasons outside of the statistician's control, see e.g.\ \citep{medical} and references therein. As
such, the time-robust minimax setting nicely fits in recent work
promoting {\em always-valid\/} confidence intervals
\citep{howard, pace2019likelihood} and testing {\em safe under optional
 continuation\/} \citep{GrunwaldHK19} as a generic, more robust
replacement of traditional testing and confidence.

\begin{sloppypar}
Given the fact that time-robustness is a natural mode of analysis, it
is perhaps not so surprising, that the somewhat disturbing conflict
between consistency and rate optimality in standard estimation theory
known as {\em AIC-BIC dilemma\/} \citep{yang, tim, almost}, quite simply
disappears under the novel definition of minimax rate. We discuss this
motivating application at length towards the end of the paper, in
Section~\ref{sec:main}.
\end{sloppypar}

\smallskip
\noindent{\bf Results.}
We provide several results comparing the time-robust to the standard
minimax rates. First, in Theorem~\ref{general} we show that for most
estimation problems the strongly adversarial minimax rate goes up by
at most a logarithmic factor. A natural question arises: is there an
estimation problem, for which time-robust minimax rates and standard
minimax rates do not coincide? The answer is positive: in
Theorem~\ref{thm:exp} we show that, under the standard squared error
loss, both the weakly and the strongly adversarial time-robust rates for
estimating a parameter in the Gaussian location family are equal to
$n^{-1}\log\log n$, while the standard minimax rate for this problem
is $n^{-1}$. The proof for the upper bound easily extends to most
standard multivariate exponential families, as we show in
Theorem~\ref{thm:upperbound}, and we conjecture the lower bound
extends as well.

These results originate from the law of iterated logarithm. To get an
intuition, consider the maximum likelihood estimator for the mean of a
one-dimensional Gaussian distribution with known variance. The
estimator is simply equal to the sample average. By the law of
iterated logarithm (see, for instance, \cite{lil}) the squared
distance between the sample average and the truth is of order
$n^{-1}\log\log n$ infinitely often with probability one. Therefore, 
for a suitable stopping rule the expected loss will also be of at
least the same order. While a lower bound on the rate for the MLE is
thus easy to determine, it turns out to be considerably more difficult
to show this lower bound for arbitrary estimators --- despite the
simple Gaussian location setting, this required new techniques. One
reason is that we must allow for arbitrary estimators, and these can
depend on the data in tricky ways. For example, one might change one's
estimate if the empirical average on the first half of the data is
more than a constant times $\sqrt{n^{-1}\log\log n}$ from the
empirical average on the second half. Since we show that no estimator
(decision rule) at all can beat $n^{-1}\log\log n$, we may think of
Theorem~\ref{thm:exp} as a {\em decision-theoretic law of the iterated
 logarithm}.

\begin{sloppypar}
The proofs for the upper bound on the strongly adversarial rate are
based on finite-time laws of the iterated logarithm based on
nonnegative supermartingales, a technique initially proposed by
\cite{darling}, and recently extended by
e.g. \cite{balsubramani, howard}. To adjust these techniques to our
strongly adversarial setting, we use two fundamental results from
\cite{shafer2011test} that link nonnegative supermartingales to
$p$-values and so-called $E$-values
\citep{vovk2019combining, GrunwaldHK19}.
\end{sloppypar}

The remainder of the paper is organized as follows. We give
the necessary measure-theoretic background in Section
\ref{sec:background}. Section \ref{sec:old_def} recalls the standard
definition of minimax rates. Section
\ref{sec:new_def} extends this definition to time-robust
minimax rates. Section \ref{sec:main} contains the main results of the paper, 
and the AIC-BIC example showing how the new definitions
can be used in the context of combined model selection and
estimation. We provide a short discussion in Section
\ref{sec:discussion}. All proofs are given in Section
\ref{sec:proofs}, with some details deferred to the appendix.
\section{Basic definitions}
\label{sec:def}
\subsection{Background on measure theory; notation}
\label{sec:background}

Let $\XX$ be a topological space endowed with Borel sigma-algebra $\BB$. Consider a probability space $(\Omega, \A, \PP)$. We say that a random variable $X:\Omega\to\XX$ is measurable on $(\Omega, \A)$ if $X^{-1}(B)=\{\omega\in\Omega:X(\omega)\in B\}\in\A$ for every Borel set $B\in\BB$. 
For a random variable $X:\Omega\to\XX$ let $\sigma(X)$ be a {\it sigma-algebra generated by $X$}, defined as the smallest sigma-algebra such that $X$ is measurable on $(\Omega, \sigma(X))$. Similarly, for a sequence of random variables $X_1, \dots, X_n:\Omega\to\XX$ denote the {\it sigma-algebra generated by $X_1, \dots, X_n$} by $\sigma(X_1, \dots, X_n)$. A {\it filtration} $\FF=(\FF_n)_{n\in\NN}$ is defined as a nondecreasing family of sigma-algebras.
We say that a random variable $\tau:\Omega\to\NN$ is a {\it stopping time} with respect to $\FF$, if $\{\omega:\tau(\omega)\leq n\}\in\FF_n$ for all $n\in\NN$. For more background on measure theory and stopping times see, for instance, \cite{olav} (Chapters 1, 2, and 7). 
 
 We write $a_n\lle b_n$, when there exists a constant $c>0$ such that $a_n\leq c b_n$ holds for all $n\in\NN$; and $a_n\asymp b_n$, when there exist constants $c_1, c_2>0$ such that $c_1a_n\leq b_n\leq c_2a_n$ holds for all $n\in\NN$. 
 \subsection{Standard definition of convergence rates}
 \label{sec:old_def}
Suppose we observe a random i.i.d.\, sample $X_1, \dots, 
X_n\in\mathcal{X}$ from a distribution $P_\theta$ indexed by a
parameter $\theta\in\Theta$, where $\Theta$ is potentially infinite
dimensional. Consider the problem of estimating parameter $\theta$
from the available data $X^n=(X_1, \dots, X_n)$. We measure the
estimation error with respect to some metric
$d:\Theta\times\Theta\to\RR^+_0$. In order to choose an estimator for a
particular setting it is important to have a way of comparing the
performance of estimators. The minimax paradigm offers a classic
solution for performance evaluation. It judges the performance of an
estimator by its rate of convergence, which is defined by
taking the worst case scenario over all elements in the given
parameter space. More precisely, we define an {\em estimator\/} $\hat\theta$ to be a collection $\{\hat\theta_n\}_{n \in \NN}$ such that for each $n$, $\hat\theta_n =\hat\theta(X^n):\mathcal{X}^n\to\Theta$ is a function from samples of size $n$ to $\Theta$. We say that
$\hat\theta$ has a {\it rate
 of convergence} $f_{\hat\theta}:\NN\to\RR^+$ if 
\begin{enumerate}[(i)]
\item there exists $C>0$ such that for every sample size $n\in\NN$
\[\sup_{\theta\in \Theta}\EE_{X^n \sim \PP_\theta}\left[\frac{d(\theta, \hat\theta_n)}{f_{\hat\theta}(n)}\right]\leq C. 
\]
\item For any function $\tilde f:\NN\to\RR^+$ such that $f_{\hat\theta}(n)/\tilde f(n)\to\infty$ 
\[
\sup_{\theta\in \Theta}\EE_{X^n \sim \PP_\theta}\left[\frac{d(\theta, \hat\theta_n)}{\tilde f(n)}\right]=\infty. 
\]
\end{enumerate}
An estimator $\hat\theta$ is called {\it minimax optimal} (up to a constant factor), if
\begin{equation}\label{eq:supper}
f_{\hat\theta}(n) \asymp\inf_{\tilde\theta} {f_{\tilde\theta}(n)}, 
\end{equation}
where the infimum is taken over all estimators that can be defined on the domain.

Here we expressed the minimax rate in terms of a supremum over a
ratio. It is perhaps more common to express $\hat\theta$ being minimax optimal, i.e. (\ref{eq:supper}), without
using ratios, but directly (yet equivalently) as
$$\sup_{\theta\in \Theta}\EE_{X^n \sim \PP_\theta}\left[{d(\theta, 
 \hat\theta_n)}\right]\leq C \inf_{\tilde\theta} \sup_{\theta\in
 \Theta}\EE_{X^n \sim \PP_\theta}\left[{d(\theta, 
 \tilde\theta_n)}\right].$$ Under this formulation, the
straightforward extension to taking a worst-case over time trivializes
the problem: if we take the supremum on the right not just over
$\theta \in \Theta$ but also over $n$, it will be achieved for
$n=1$ (or other small sample sizes) --- Nature would always choose the smallest possible sample size
and the problem would become uninteresting. By rephrasing minimax
optimality in terms of ratios, and taking a supremum over stopping
times/rules, we do get a useful extension, as we now show.

\subsection{Time-Robust Convergence rates}
\label{sec:new_def}
The classic definitions for minimax rates assume the sample size is
fixed and known in advance. Now we propose our generalized definitions
that account for not knowing the sample size in advance.
 
Let $\TT$ be a collection of all possible almost surely finite stopping times with respect to the sequence of sigma algebras $\FF_n=\sigma(X_1, \dots, X_n)$ generated by the data $X^n. $ We say that an estimator $\hat\theta$ (with
$\hat\theta_n=\hat\theta(X^n)$) has a {\it weakly adversarial time-robust rate of convergence\/} $f_{\hat\theta}:\NN\to\RR^+$ if 
\[
\sup_{\theta\in \Theta}\sup_{\tau\in\TT}\EE_{X^\infty \sim \PP_\theta}\left[\frac{d(\theta, \hat\theta_\tau)}{f_{\hat\theta}(\tau)}\right]\leq C
\]
and for any function $\tilde f:\NN\to\RR^+$ such that $f_{\hat\theta}(n)/\tilde f(n)\to\infty$ 
\[
\sup_{\theta\in \Theta}\sup_{\tau\in\TT}\EE_{X^\infty \sim \PP_\theta}\left[\frac{d(\theta, \hat\theta_\tau)}{\tilde f(\tau)}\right]=\infty. 
\] 
An estimator $\hat\theta$ is {\it weakly adversarial time-robust minimax optimal} if its weakly-adversarial time-robust rate of convergence $f_{\hat\theta}$ satisfies
\begin{equation}\label{eq:weaklymmo}
f_{\hat\theta}(n)\asymp \inf_{\tilde\theta}f_{\tilde\theta}(n), 
\end{equation}
where the infimum is taken over all estimators. Then the function
$f_{\hat\theta}$ is called the {\it weakly adversarial time-robust minimax rate} for the
given statistical problem.

We say that an estimator $\hat\theta$ (with
$\hat\theta_n=\hat\theta(X^n)$) has a {\it strongly adversarial time-robust rate of convergence\/} $g_{\hat\theta}:\NN\to\RR^+$ if 
\[
\sup_{\theta\in \Theta} \EE_{X^\infty \sim \PP_\theta} \left[ \sup_{n\in\NN} \frac{d(\theta, \hat\theta_n)}{g_{\hat\theta}(n)}\right] \leq C
\]
and for any function $\tilde g:\NN\to\RR^+$ such that $g_{\hat\theta}(n)/\tilde g(n)\to\infty$ 
\[
\sup_{\theta\in \Theta}\EE_{X^\infty \sim \PP_\theta}\left[\sup_{n\in\NN}\frac{d(\theta, \hat\theta_n)}{\tilde g(n)}\right]=\infty. 
\] 
An estimator $\hat\theta$ is {\it strongly adversarial time-robust minimax optimal} if its strongly adversarial time-robust rate of convergence $g_{\hat\theta}$ satisfies (\ref{eq:weaklymmo}) with $f_{\cdot}$ replaced by $g_{\cdot}$, 
where again the infimum is taken over all estimators. Then the function
$g_{\hat\theta}$ is called the {\it strongly adversarial time-robust minimax rate} for the
given statistical problem.

We may also call the weakly adversarial time-robust rate of convergence the {\em
 always-valid convergence rate}, since the freedom in when to stop is
exactly the same as in the recent papers on {\em always-valid\/} (also
known as `anytime-valid') confidence intervals and $p$-values. The
strongly adversarial time-robust rate may also be called the {\em
 worst-case-sample size\/} convergence rate. Statistical estimation
in which the stopping time $\tau$ is not known in advance is often
referred to as estimation with {\em optional stopping}. However, in
e.g. the Bayesian literature this is usually interpreted as `the
stopping rule may be unknown, but it is chosen independently of
$\theta$. We may think of the weakly time-robust or ``always-valid''
rate as the rate obtained in a setting with a stronger form of
optional stopping, in which Nature jointly chooses $\theta$ and the
stopping time, which can then be chosen as a function of
$\theta$. Note that choosing a stopping time is equivalent to choosing
a stopping {\em rule}, which, at each sample size $n$ decides, based
on $\theta$ and all past data, whether to stop or not. In contrast, 
the strongly adversarial time-robust rate corresponds to deciding to stop at the
worst $n$, a rule does not depend on the true $\theta$ but instead, 
unlike a stopping time, requires a look into the future.

Clearly any estimator $\hat\theta$, if it has strongly adversarial time-robust
rate $f(n)$, has weakly adversarial time-robust rate and standard minimax rate that are at most $f(n)$. Similarly, any standard
minimax rate can be no larger, up to a constant factor, than any
weakly adversarial time-robust minimax rate, which in turn can be no larger, up to
a constant factor, than the corresponding strongly adversarial time-robust minimax
rate. In the next section we study the relationship between these
three quantities more closely.
 
\section{Main results}
\label{sec:main}
Our first result gives a general upper bound on the strongly adversarial
time-robust minimax rate, and hence also on the weakly adversarial time-robust
minimax rate, as compared to the usual minimax rate. This result holds
under very weak conditions for any parameter estimation setting.
Then, we consider estimating the mean parameter in the Gaussian
location family with the usual Euclidean distance.
It turns out that for this problem both the weakly and the
strongly adversarial time-robust minimax rates are equal to $n^{-1}\log\log n$, 
while the usual minimax rate is $n^{-1}$.

\subsection{Time-robust rates are never much worse than standard rates}
In the theorem below we show that the strongly (and hence the weakly)
adversarial time-robust minimax rate differs from the usual minimax rate by at
most a logarithmic factor under a very mild assumption on the decay of
the usual minimax rate function. The result makes no assumption about the metric $d$. 
\begin{thm}
\label{general}
Let $f:\NN\to\RR^+$ be a minimax rate for some given statistical
estimation problem, such that $f$ is non-increasing and
\begin{equation}
\label{eq:weak}
\frac{f(2n)}{f(n)}\geq C
\end{equation}
for some $C>0$. Then the strongly adversarial time-robust minimax rate $g(n)$ for
the same problem satisfies
\[
g(n)\lle f(n)\log n. 
\]
\end{thm}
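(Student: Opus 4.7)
The plan is to exhibit an estimator $\hat\theta$ whose strongly adversarial time-robust rate is $\lle f(n)\log n$, starting from a standard minimax optimal estimator $\hat\theta^{\star}$ with $\sup_{\theta\in\Theta}\EE_\theta[d(\theta, \hat\theta^{\star}_m)]\leq C_0\, f(m)$ for every $m$, and evaluating it on a doubling scheme. Concretely, for $n\in[2^k, 2^{k+1})$ I would let $\hat\theta_n$ depend only on the first $2^k$ samples. Under the hypothesis $f(2n)/f(n)\geq C$ together with $f$ non-increasing, one has $f(n)\geq C\, f(2^k)$ and $\log n \geq k\log 2$ throughout this block, so
\[
\sup_{n\in[2^k, 2^{k+1})} \frac{d(\theta, \hat\theta_n)}{f(n)\log n} \;\lle\; \frac{d(\theta, \hat\theta_n)}{k\, f(2^k)},
\]
and the task reduces to controlling the expected supremum of this normalised error across dyadic scales $k$.

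The naive choice $\hat\theta_n = \hat\theta^{\star}_{2^k}$ combined with a first-moment Markov bound gives only $\PP(\text{bad at scale } k)\lle 1/k$, which is not summable and so insufficient for a union bound over $k$. The remedy is to \emph{boost} the block estimator: partition the first $2^k$ samples into $K_k$ disjoint equal-sized chunks, apply $\hat\theta^{\star}$ to each independently, and aggregate the chunk estimates by a Fr\'echet/geometric median (or any majority-style robust aggregator adapted to the metric $d$). Markov at the chunk level yields per-chunk failure probability $\leq 1/4$, and a Chernoff bound on the number of failing chunks together with the robustness of the median aggregator yields
\[
\PP_\theta\!\left( d(\theta, \hat\theta_n) > M\cdot f\!\left(2^k/K_k\right)\right)\leq e^{-c K_k}
\]
for absolute constants $M, c>0$. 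Setting $\alpha:=\log_2(1/C)$ and choosing $K_k\asymp k^{1/\alpha}$ makes $K_k^{\alpha}\asymp k$; iterating the doubling condition to bound $f(2^k/K_k)\leq K_k^{\alpha}\,f(2^k)$ then gives $M\cdot f(2^k/K_k)\lle k\, f(2^k)$, so the displayed event controls the inner supremum by a constant multiple of $f(n)\log n$.

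Given these per-scale tail bounds, the remainder is a layer-cake computation: since $\sum_k e^{-c k^{1/\alpha}}<\infty$ for every $\alpha>0$, we obtain $\sup_\theta\EE_\theta[\sup_n d(\theta, \hat\theta_n)/(f(n)\log n)]<\infty$ uniformly in $\theta$, which by definition yields $g(n)\lle f(n)\log n$. The main obstacle is the middle step: the minimax hypothesis delivers only a first-moment guarantee on $\hat\theta^{\star}$, and manufacturing exponential tails from this in a general metric-space setting is where the argument has real content; everything else is bookkeeping with the doubling condition on $f$ and the layer-cake formula.
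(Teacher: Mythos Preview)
Your proposal is sound and would yield the stated bound, but it takes a more laborious path than the paper's. Both arguments begin the same way: freeze the estimator on dyadic blocks, setting $\hat\theta'_n = \hat\theta^\star_{2^{\lfloor \log_2 n\rfloor}}$ (or a variant thereof), and use the doubling hypothesis to replace $f(n)$ by $f(2^k)$ up to a constant on each block. The divergence is in how one controls $\EE_\theta\bigl[\sup_k d(\theta,\hat\theta_{(k)})/(k\, f(2^k))\bigr]$. You diagnose the naive per-scale Markov bound as ``not summable'' and therefore invest in median-of-means boosting to manufacture exponential tails; this works (the tournament-style aggregator you allude to does function in an arbitrary metric space, giving $d(\theta,\text{agg})\leq 3r$ whenever more than half the chunk estimates lie within $r$ of $\theta$), but it is machinery the problem does not actually require.

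The paper's device is simply to trade the supremum for a sum against a summable weight sequence: for any probability mass function $\pi$ on $\NN$,
\[
\EE_\theta\Bigl[\sup_k \pi(k)\,\frac{d(\theta,\hat\theta_{(k)})}{f(2^k)}\Bigr]
\;\leq\;\EE_\theta\Bigl[\sum_k \pi(k)\,\frac{d(\theta,\hat\theta_{(k)})}{f(2^k)}\Bigr]
\;\leq\;\sup_k \EE_\theta\Bigl[\frac{d(\theta,\hat\theta_{(k)})}{f(2^k)}\Bigr]\leq C',
\]
so with $\pi(k)\asymp k^{-1-\alpha}$ one obtains $g(n)\lle f(n)(\log n)^{1+\alpha}$ for every $\alpha>0$ directly from the first-moment minimax hypothesis, with no tail boosting and no structural demand on the metric beyond what is already assumed. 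In short, the step you flag as the ``main obstacle'' --- turning a first-moment guarantee into exponential tails in a general metric --- the paper sidesteps entirely. Your route does buy one modest thing: carried through, it delivers the clean factor $\log n$ rather than $(\log n)^{1+\alpha}$ for arbitrary $\alpha>0$. But as a proof of the theorem, the paper's $\sup\leq\sum$ swap is the far more economical argument, and it shows that the ``naive'' block estimator $\hat\theta^\star_{2^k}$ already suffices without any boosting.
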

Notice that the assumption \eqref{eq:weak} holds for $f(n) \asymp n^{-\gamma} (\log n)^{\beta}$ with $0 < \gamma \leq 1$, $\beta \geq 0$, which is equal to the minimax rate for most standard parametric and nonparametric estimation problems, and under most standard metrics, see e.g. \cite{tsybakov}. The proof of Theorem \ref{general} involves constructing an estimator
that uses only part of the available data. We then show that the
standard minimax rate for this estimator is $f(n)\log n$ and that it remains unaffected if we include the supremum over $n$.

\subsection{The time-robust rate can be different from the standard rate}
Now we present a problem for which the time-robust minimax rates, 
while equal to each other, do not coincide with the usual minimax
rate. Consider a Gaussian location family with fixed variance $\{
\PP_\mu, \mu \in \RR\}$, where each $\PP_\mu$ is a Gaussian distribution with mean $\mu$ and variance one. Let
$d(\mu, \mu')=(\mu-\mu')^2$ be the usual Euclidian distance.
The following
theorem shows that the strongly adversarial time-robust minimax rate for
estimating $\mu$ is upper bounded by $n^{-1}{\log\log n}$ and the
weakly adversarial time-robust minimax rate is lower bounded $n^{-1}{\log\log n}$, 
so that both rates coincide and are equal to $n^{-1}{\log\log
 n}$. Furthermore, it shows that the rate is attained by the maximum
likelihood estimator (MLE).
 
 To avoid taking a logarithm of a negative number we set $f(n)=1$ for $n=1, 2$ and
 \[
 f(n)=n^{-1}{\log\log n}, \text{ when } n\geq3. 
 \]
 \begin{thm} 
 \label{thm:exp}
Let $\{\PP_\mu, \mu\in\RR\}$ represent the Gaussian location family,
i.e. under $\PP_\mu$, the $X_1,X_2, \ldots$ are i.i.d. $\sim \NNN(\mu,
1)$. Then,
 \begin{enumerate}[(i)]
\item {\it Upper bound. } There exists a constant $C>0$ such that 
\begin{equation}
\label{tough_part}
\sup_{\mu \in \RR} \EE_{X^{\infty}\sim \PP_\mu} \left[\sup_{n\in\NN} \frac{ (\mu- \MMLE_n)^2}{f(n)} \right]\leq C, 
\end{equation}
where $\MMLE_n=\MMLE(X^n)$ is the maximum likelihood estimator. 
\vspace{0.4cm}
\item {\it Lower bound. } Let $\TT$ be the collection of all (a.s.\, finite) stopping times w.r.t.\, the filtration $\FF=\{\sigma(X^n), n\in\NN\}$. 
 There is a $C > 0$ such that for
any estimator $\hat\mu$ with $\hat\mu_n=\hat\mu(X^n)$, 
\begin{equation}
\label{eq:friday0}
\sup_{\mu\in \RR}\sup_{\tau\in\TT}\EE_{X^\infty\sim\PP_\mu}\left[\frac{(\mu-\hat\mu_\tau)^2}{f(\tau)}\right]\geq C, 
\end{equation}
and 
\begin{equation}
\label{eq:friday}
\sup_{\mu\in \RR}\sup_{\tau\in\TT}\EE_{X^\infty\sim\PP_\mu}\left[\frac{(\mu-\hat\mu_\tau)^2}{g(\tau)}\right]=\infty, 
\end{equation}
for all non-increasing $g:\NN\to\RR^+$ such that $f(n)/g(n)\to\infty$. 
\end{enumerate}
 \end{thm}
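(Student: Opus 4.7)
By translation invariance of the Gaussian location family and the equivariance of the MLE, it suffices to treat $\mu=0$. Writing $S_n=X_1+\cdots+X_n$, we have $\MMLE_n=S_n/n$ under $\PP_0$, so the claim reduces to showing $\EE_{X^\infty\sim\PP_0}[\sup_{n\geq 3} S_n^2/(n\log\log n)]\leq C$. I would establish this via a finite-time law of the iterated logarithm in the style of \cite{darling,howard}. For each $\lambda\in\RR$ the process $M_n(\lambda)=\exp(\lambda S_n-n\lambda^2/2)$ is a $\PP_0$-martingale; averaging with a suitable mixing measure $\pi$ yields a nonnegative martingale $Z_n=\int M_n(\lambda)\,d\pi(\lambda)$ with $\EE_{X^\infty\sim\PP_0}Z_n=1$. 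Applying Ville's maximal inequality and inverting, with a proper choice of $\pi$ (or alternatively by dyadic peeling $n\in[2^k,2^{k+1})$ combined with Doob's sub-Gaussian maximal inequality within each block and a union bound over $k$), gives an exponential tail bound of the form $\PP_0(\sup_{n\geq 3}[S_n^2/n-2\log\log n]\geq t)\leq C'e^{-t}$ for $t\geq 0$. Integrating this tail then yields (\ref{tough_part}); the indices $n\in\{1,2\}$, where $\log\log n$ is undefined or small, are absorbed into the convention $f(n)=1$ there.

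\textbf{Part (ii): reduction to a pointwise statement.} Both lower bounds would follow from the following ``decision-theoretic LIL'', denoted $(\star)$: for every estimator $\hat\mu$ there exist $\mu\in\RR$ and $c>0$ such that $\limsup_n (\hat\mu_n-\mu)^2/f(n)\geq c$ $\PP_\mu$-almost surely. Given $(\star)$, I would construct $\tau_N=\inf\{n\geq N:(\hat\mu_n-\mu)^2\geq (c/2)f(n)\}$; this is $\FF$-adapted and, by $(\star)$, a.s.\ finite, so $\EE_\mu[(\hat\mu_{\tau_N}-\mu)^2/f(\tau_N)]\geq c/2$, yielding (\ref{eq:friday0}). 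For (\ref{eq:friday}), given $g$ with $f/g\to\infty$, along the same subsequence where $(\hat\mu_n-\mu)^2/f(n)\geq c/2$ one has $(\hat\mu_n-\mu)^2/g(n)\geq (c/2)\,f(n)/g(n)\to\infty$, so $\tau^{(M)}=\inf\{n:(\hat\mu_n-\mu)^2\geq M\,g(n)\}$ is a.s.\ finite for every $M$ and produces expectation $\geq M$; letting $M\to\infty$ gives the blow-up.

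\textbf{Main obstacle: establishing $(\star)$ for arbitrary estimators.} This is where the theorem's ``decision-theoretic law of the iterated logarithm'' lives and is the genuine difficulty. For $\hat\mu=\MMLE$, $(\star)$ is Khintchine's classical LIL applied to $\bar X_n$. For general $\hat\mu$, however, $\hat\mu_n$ can depend on $X^n$ in essentially arbitrary measurable ways (e.g., the split-sample comparison example from the introduction), and no off-the-shelf LIL applies. My plan is to argue by contradiction and exploit the special structure of the Gaussian location model: assume $(\hat\mu_n-\mu)^2=o(f(n))$ $\PP_\mu$-a.s.\ for every $\mu$, and use a change-of-measure argument driven by the Gaussian likelihood ratio $\exp(\eps S_n-n\eps^2/2)$ together with the completeness and sufficiency of $\bar X_n$ to show that $\hat\mu_n$ must coincide with $\bar X_n$ up to a residual that is $o(\sqrt{f(n)})$ on the stopping times of interest. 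The LIL for $\bar X_n$ would then be inherited by $\hat\mu_n$, producing the contradiction. The delicate point is carrying out this reduction uniformly over stopping times that may themselves depend on $\mu$; I expect the right technical device to be a Bayesian mixture over a geometric sequence of alternatives $\mu_k\asymp\sqrt{f(n_k)}$ with $n_k=2^k$, combined with a maximal inequality for the induced log-likelihood-ratio martingale, so that infinitely many scales simultaneously force the obstruction and produce an iterated-logarithm factor rather than a mere $\log n$.
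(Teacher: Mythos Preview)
Your Part~(i) is on the right track and broadly matches the paper. One technical remark: getting a bound on $\EE\bigl[\sup_n S_n^2/(n\log\log n)\bigr]$ is slightly more than what Ville's inequality alone delivers (Ville controls $\PP(\sup_n Z_n\ge 1/\alpha)$, not an expectation of the supremum). The paper handles this via the Shafer--Vovk device: for any test supermartingale $(U_n)$ one has $\EE[\sup_n \sqrt{U_n}]\le 2$, which turns the maximal inequality into a moment bound. It then builds the mixture $Z_n=\sum_i\gamma_i e^{\eta_i S_n-n\eta_i^2/2}$ with a carefully chosen discrete prior $\gamma_i=1/(i(i+1))$, $\eta_i\asymp\sqrt{(\log i)/e^i}$, so that picking $i\approx\log n$ extracts the $\log\log n$ factor. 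Your peeling-plus-Doob alternative would also work; either route is fine.

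Part~(ii) is where your proposal diverges from the paper, and your plan for $(\star)$ contains a real gap. The contradiction argument you sketch --- use sufficiency/completeness of $\bar X_n$ and a change of measure to force $\hat\mu_n=\bar X_n+o(\sqrt{f(n)})$ --- does not go through: completeness constrains \emph{unbiased} estimators, whereas $\hat\mu$ is arbitrary, and ``$(\hat\mu_n-\mu)^2=o(f(n))$ for every $\mu$'' does not by itself force $\hat\mu_n$ close to $\bar X_n$ on the LIL excursions (the estimator may detect and correct exactly those excursions). The ``Bayesian mixture over a geometric sequence of alternatives'' idea is too vague to close the argument.

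The paper sidesteps $(\star)$ entirely. With $\tilde\mu_n=\tfrac{n}{n+1}\bar X_n$ the posterior mean under a $\NNN(0,1)$ prior, it splits into two cases via the \emph{data-only} event $\cG_n=\{(\tilde\mu_n-\hat\mu_n)^2\ge \tfrac{c}{2}f(n)\}$. If for some $\mu$ the events $\{\cF_{\mu,n}\cap\bar\cG_n\}$ occur infinitely often with positive probability (where $\cF_{\mu,n}=\{(\mu-\tilde\mu_n)^2\ge cf(n)\}$, which holds i.o.\ a.s.\ by the classical LIL), then $\hat\mu$ is close to $\tilde\mu$ while $\tilde\mu$ is far from $\mu$, and your stopping-time construction applies directly. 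Otherwise $\PP_\mu(\cG_n\text{ i.o.})=1$ for \emph{every} $\mu$. In this second case the paper does not look for a single bad $\mu$; instead it lower-bounds the minimax risk by the Bayes risk under the $\NNN(0,1)$ prior, with the stopping time $\tau^\star=\min\{n>n_0:\cG_n\}$. Since $\tau^\star$ depends only on the data and $\tilde\mu_n$ is the posterior mean given $X^n$, the posterior risk at $\tau^\star$ satisfies
\[
\EE_{\mu\sim W\mid X^{\tau^\star}}\Bigl[\tfrac{(\mu-\hat\mu_{\tau^\star})^2}{g(\tau^\star)}\Bigr]
\;\ge\;\tfrac{(\tilde\mu_{\tau^\star}-\hat\mu_{\tau^\star})^2}{g(\tau^\star)}
\;\ge\;\tfrac{c}{2}\cdot\tfrac{f(\tau^\star)}{g(\tau^\star)},
\]
and averaging over the Bayes marginal gives the result. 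The key idea you are missing is that the ``far from the MLE'' event is $\mu$-free, so it defines a legitimate stopping time for which the Bayes-optimality of the posterior mean yields the lower bound for free --- no need to pin down a single worst $\mu$.
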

\begin{rem}
The upper bound of Theorem \ref{thm:exp} holds more
 generally for all exponential families which satisfy a mild condition
 that holds for most families used in practice --- we give the
 generalized upper bound in Section \ref{sec:proofs}. We conjecture
 that the lower bound can also be generalized to all exponential
 families, although due to the rather involved proofs we leave
 the generalization outside of the scope of this paper.
 \end{rem}
The upper bound is, unsurprisingly, based on the law of iterated
logarithm. To prove the result we use the techniques developed for
iterated logarithm martingale concentration inequalities, see
e.g. \cite{darling, balsubramani, howard}, and combine them with the
fundamental results from \cite{shafer2011test} to connect test
martingales to $p$-values and so-called $E$-values.

The proof for the lower bound relies on a number of steps.  We first
show that the bound must hold for estimators that are `MLE-like': they
are sufficiently `close', in a particular sense, to the MLE and hence
also to Bayes optimal estimators based on standard priors. In the
second step, we relate the problem of bounding the minimax risk to the
problem of bounding the Bayes risk. This idea is not new in itself and
is widely used in minimax theory, see e.g. \cite{tsybakov}. The
difficulty we face is that this standard argument does not give
anything useful if directly applied to MLE-like (and standard
Bayes-like) estimators.  In the final step of the proof, we thus
construct a stopping rule for each `non-MLE-like' estimator that stops
when the estimator is far away from the MLE, and give a lower bound
for the Bayes risk, i.e. the risk of the non-MLE-like estimator under
the Bayesian posterior. The complete proofs are given in Section
\ref{sec:proofs}.
 \subsection{Application: avoiding the AIC-BIC Dilemma in Model Selection and Post-Selection Inference}
 \label{sec:application}
Consider a simple model selection problem. Data $X^n$ are used to
select between two nested exponential family models, with
$\MMM_0=\{p_{\mu_0}, \mu_0 \in M_0 \}$ and $\MMM_1=\{p_\mu, \mu\in M\}$, 
where $M_0 \subset M$, and $\MMM_1$ is an exponential family with
mean-value parameter set $M\subset\RR^k$. 
For simplicity let $M_0 = \{ \mu_0 \}$, so that $\MMM_0$ is a
singleton. Examples include testing whether a coin is fair (using
Bernoulli distributions) or whether a treatment has an effect (using
the Gaussian location family). We consider combined model selection
and estimation: first, a model is selected using some model selection
method such as for example, AIC, BIC, cross-validation, Bayes factor
model selection, or one of its many variations. Then, based on the
chosen model, a parameter within the model is estimated using some
estimator such as the MLE or a Bayes predictive distribution (note
that if the singleton model $\MMM_0$ is selected, then the estimator
must return $\mu_0$).

Two desirable properties for such combined procedures are (i)
consistency and (ii) rate optimality of the post-model selection
estimation. \citet{yang} shows that at least for some settings having
both (i) and (ii) at the same time is impossible: any combination of a
consistent model selection and subsequent estimation method misses the
standard minimax rate (equal to $n^{-1}$ in our case) by a factor
$g(n)$ such that $g(n)\to\infty$, as $n\to\infty$. Yang's setting can
be adjusted to include the exponential family setting presented here, 
see \cite{almost} for more details. Yang also shows that a similar
problem occurs if we average over the models (in a Bayesian or any
other way) rather than select one of the models. This has been called
the {\em AIC-BIC dilemma}. While for the mathematician, it may not be
so surprising that there is no procedure which is optimal under two
different definitions of optimality, it has been argued (by Yang and
many others) that for the practitioner, there really is a dilemma: she
simply wants to get an initial idea of which model best explains her
data, indicating how to focus her subsequent research, and views
consistency and rate optimality as desirable properties, both
indicating that her procedure will do something reasonable in
idealized situations, but neither one being the ultimate goal. Which
of the two properties is more important is then often not clear.

However, if one accepts the novel definition of minimax rate presented
here, one has a way out of the dilemma after all: there exist model
selection procedures that are strongly consistent, while, if combined
with the MLE, have a standard convergence rate equal to $n^{-1}\log\log
n$ under the squared error loss. \cite{almost} showed this explicitly
for model selection based on the {\em switch distribution\/}
introduced by \cite{tim}. The switch distribution was
 specifically designed for this purpose, but some other methods
 achieve this as well. For example, while Bayesian model selection
 based on standard priors achieves only an $n^{-1} \log n$ rate
 \citep{tim}, it seems quite likely that if $M_1$ is equipped with the
 quite special {\em stitching priors\/} \citep{howard} which
 asymptote at $\mu_0$, one can also get strongly consistent model
 selection and an $n^{-1}\log\log n$ estimation rate by Bayes factor
 model selection. Since we have no explicit proof of this, we
 continue the discussion with switching rather than stitching.
The estimation rate for
the switch distribution (at least for the Gaussian location family, but we
conjecture for general exponential families) is equal to the time-robust minimax
convergence rate, derived in Theorem
\ref{thm:exp}. Thus the switch procedure is both strongly consistent {\em
 and\/} minimax optimal in the new, time-robust, sense. We see that
using time-robust definitions of minimax optimality, the gap between
(i) and (ii) above can be bridged, whereas, by Yang's theorem, this is
impossible under the standard definition of minimax rate. Hence, by
redefining minimax optimality so as to be robust with respect to {\em
 all\/} parameters (including $n$) that we as statisticians do not
have under control, the minimax rate slightly changes and the {\em
 AIC-BIC dilemma\/} simply disappears: combined consistency and
estimation optimality can be achieved by, for example, the switch
distribution. As an aside, neither AIC nor BIC itself `solve' the
dilemma with the time-robust definitions: AIC is still inconsistent, 
whereas BIC, when combined with efficient post-selection estimation, 
achieves a standard estimation rate of order $n^{-1} \log n$; since
the time-robust rate is at least the standard rate, it must still be
rate-sub-optimal under the time-robust definition of minimax rate.

\section{Discussion}
 \label{sec:discussion}
In this paper we suggested a generalization of minimax theory enabling
it to deal with unknown and data-dependent sample sizes. We introduced
two notions of time-robust minimax rates and compared them to the
standard notion of minimax rates. We showed that for most problems the
rates differ by at most logarithmic factor. We also provided an
example of a (parametric) setting, for which the weak and the strong
rates are the same, yet they differ from the standard rates by an
iterated logarithmic factor. However, it is not yet clear under what
circumstances the logarithmic upper bound on the difference, derived
in Theorem \ref{general}, is tight: for example, it might be possible
that in some standard (e.g. nonparametric) problems, the gap vanishes
(the strongly adversarial time-robust rate is within a constant factor of the
standard rate); but in others it may even be larger than order $\log
\log n$. Similarly, in some settings, the weak and strong time-robust
rates may coincide, and in some others they may differ. A major goal
for future research is thus to sort out more generally when the three
rates coincide and when they differ, and if so, by how much.

\section{Proofs} 
\label{sec:proofs}
\subsection{Proof of Theorem \ref{general}}

Let $\hat\theta$ be an estimator that achieves the standard minimax rate, i.e.\ there exists $C'>0$ such that for every $n\in\NN$
\[
\sup_{\theta \in \Theta} \EE_{X^n \sim P_{\theta}} \left[\frac{d(\hat\theta_n, \theta)}{f(n)}\right] \leq C', 
\] 
where $\hat\theta_n=\hat\theta{(X^n)}$. For $k \geq 0$ define 
\[
\hat\theta_{(k)} = \hat\theta(X_1, \ldots, X_{2^k}). 
\] 
Let $\lfloor x\rfloor=\max\{z\in\ZZ, z\leq x\}$. Consider 
\[
\hat\theta'_n= \hat\theta'{(X^n)}=\hat\theta_{(\lfloor \log_2 n \rfloor)}. 
\] to be the function of
data $X^n$ that outputs
$\hat\theta_{(\lfloor \log_2 n \rfloor)}$. In what follows we show that $\hat\theta'$ achieves a time-robust minimax rate satisfying the claim of the theorem. 

Consider a probability mass function $\pi:\NN_0\to[0, 1]$ with $\sum_{j\geq 0} \pi(j) = 1$. Denote $\EE_\theta[\cdot]=\EE_{X^\infty\sim\PP_\theta}[\cdot]$. Because of assumption \eqref{eq:weak} for any $\theta\in\Theta$ we have 
\begin{align*}
\EE_{\theta}\left[ \sup_{n\in\NN} \pi(\lfloor \log_2 n \rfloor) \cdot 
 \frac{d(\hat\theta'_n, \theta)}{f(n)} \right] &\leq C \EE_{\theta}\left[ \sup_{j\in\NN} \pi(j) \cdot 
 \frac{d(\hat\theta_{(j)}, \theta)}{f(2^j)} \right]
\leq \\
 &\leq C \EE_{\theta}\left[ \sum_{j\in\NN} \pi(j) \cdot 
 \frac{d(\hat\theta_{(j)}, \theta)}{f(2^j)} \right]\leq C \sup_{j\in\NN} \EE_{\theta}\left[
 \frac{d(\hat\theta_{(j)}, \theta)}{f(2^j)} \right] . 
\end{align*}
The last inequality is due to $\sum_{j\geq 0} \pi(j) = 1$. Since $\hat\theta$ achieves the standard minimax rate we have
\begin{align*}
& \sup_{j\in\NN} \EE_{\theta}\left[
 \frac{d(\hat\theta_{(j)}, \theta)}{f(2^j)} \right] 
 \leq \sup_{n\in\NN} \EE_{\theta}\left[
 \frac{d(\hat\theta_{n}, \theta)}{f(n)}\right] \leq C'. 
\end{align*}
Putting everything together we arrive at
\[
\EE_{\theta}\left[\sup_{n\in\NN} \pi(\lfloor \log_2 n \rfloor) \cdot 
 \frac{d(\hat\theta'_ n, \theta)}{f( n)} \right]\leq CC'. 
\]
Let $\pi(j) \asymp j^{-1 - \alpha}$. Then for every $\alpha > 0$ there exists a constant $C''>0$ such that for any $\theta\in\Theta$ 
$$
\EE_{\theta}\left[\sup_{n\in\NN} \frac{1}{(\lfloor \log_2 n \rfloor + 1)^{1+ \alpha}} \cdot 
 \frac{d(\hat\theta'_ n, \theta)}{f( n)} \right] \leq C''. 
$$
This finishes the proof of the theorem.

\subsection{ Proof of {\it(i)} in Theorem \ref{thm:exp} }

We will prove a more general result that holds for all exponential families.

\subsubsection{Preliminaries}
Consider an exponential family $P_{\bar\Theta}=\{ \PP_\theta, \theta \in \bar \Theta\}$, $\bar\Theta \subset \RR^k$, defined as the family of densities
\[
p_\theta(x)=r(x)e^{\theta^{T} \phi(x)-\psi(\theta)}, \, x\in\XX, \, \theta\in\bar\Theta, 
\]
Here $\phi(x)$ is a sufficient statistics for $\theta$.  When we write
$X^{\infty} \sim \PP_{\theta}$ we mean that $X_1, X_2, \ldots$ are
i.i.d. with each $X_i \sim \PP_{\theta}$.  We use the mean-value
parametrization of the exponential family and set $P_{\bar M} =\{
\PP_\mu, \mu \in \bar M\}$, $\bar M\subset\RR^k$ with the link
function
\begin{equation}\label{eq:helpthereader}
\mu(\theta)=\EE_{X\sim\PP_\theta}\left[\phi(X)\right]. 
\end{equation}
We let $\theta(\cdot)$, the inverse of $\mu(\cdot)$, be the transformation function from the mean-value parametrization to the canonical one. $\theta(\cdot)$ exists for all exponential families, see e.g. \cite{brown}.

We assume the parameter space $\bar M$ is such that maximum likelihood
estimator lies in $\bar M$ and is unique. That means we potentially
have to extend the original family $\{ \PP_{\theta}, \theta \in
\bar\Theta\}$ to accommodate that by including distributions `on the
boundary'. For example, in the Bernoulli model, the natural parameter
ranges from $- \infty$ to $\infty$, corresponding to $\{\PP_\mu, \mu
\in (0, 1)\}$, excluding the degenerate distributions $\PP_0$ and
 $\PP_1$. We then simply set $\bar{M} = [0, 1]$ to include these distributions.
 
More formally, the assumption is as follows. 
\begin{assumption}[]
 \label{mle_form}
 $\bar M$ is such that the maximum likelihood estimator
 $\MMLE_n=\MMLE(x^n)$ satisfies
 \[
 \MMLE_n=\frac1n\sum_{i=1}^n \phi(x_i)\in \bar M
 \]
 for all $x_1, \dots, x_n\in\XX$. 
 \end{assumption}
This assumption is needed since we are using the properties of the average of i.i.d.\, random variables to prove a statement about the MLE. However, the assumption is rather weak: most standard exponential families either satisfy
Assumption~\ref{mle_form} or can be extended to satisfy it; see
Chapter 5 of \cite{brown}.

 Furthermore, we introduce a definition of a CINECSI subset of $\bar M$. 
 \begin{df}
 A CINECSI (Connected, Interior-Non-Empty Compact Subset of Interior) subset of a set $ \bar M$ is a connected subset of the interior of $\bar M$ that is itself compact and has nonempty interior. 
 \end{df}
For discussion on CINECSI subsets see \cite{peterbook}. 
 
Finally, we introduce an additional assumption on the set of true parameters $M\subseteq \bar M$, from which the data is assumed to be generated and over which we are taking the supremum.

\begin{assumption}[]
 \label{inside}
$M\subseteq \bar M$ is such that there exist constants $\sigma>0$ and $\delta>0$ such that for all $\eta\in \RR^k$ with $\|\eta\|^2\leq\delta$, where $\|\cdot\|$ is the usual Euclidian distance, and all $\mu\in M$
\begin{equation}
\label{asd}
\EE_{X\sim \PP_\mu}\left[{e^{\eta^T(\phi(X)-\mu)}}\right]\leq e^{\sigma\eta^T\eta/2}. 
\end{equation}
 \end{assumption}
 
 In the proposition below we show that the Assumption \ref{inside} is satisfied for the Gaussian location family with $M=\bar M$ and for other exponential families when $M$ is a CINECSI subset of $\bar M$. This condition is required in our proofs for bounding Fisher information, but might potentially be relaxed if one uses different proof techniques. 

\begin{prop}Assumption \ref{inside} is satisfied for the following settings:

\begin{enumerate}[(i)]
\item When $P_{\bar M}=\{\PP_\mu, \mu\in\bar M\}$ is Gaussian location family, i.e.\ $\PP_\mu$ represents a $\NNN(\mu, 1)$ distribution, and $M=\bar M=\RR.$
\item When $P_{\bar M}$ is any exponential family and $M$ is a CINECSI subset of $\bar M$.
\end{enumerate}
\end{prop}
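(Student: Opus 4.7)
The key identity is that for an exponential family, the moment generating function of $\phi(X)-\mu$ under $\PP_\mu = \PP_{\theta(\mu)}$ has a clean form in terms of $\psi$. Specifically, since $\PP_{\theta(\mu) + \eta}$ is still in the family (provided $\theta(\mu) + \eta \in \bar\Theta$), integrating out the density gives
\[
\EE_{X\sim \PP_\mu}\bigl[e^{\eta^T(\phi(X)-\mu)}\bigr] = \exp\bigl(\psi(\theta(\mu)+\eta) - \psi(\theta(\mu)) - \eta^T \mu\bigr).
\]
Taking logs and Taylor-expanding around $\eta = 0$, using $\nabla \psi(\theta(\mu)) = \mu$ (this is exactly \eqref{eq:helpthereader}), there exists $\xi = \xi(\mu,\eta) \in [0,1]$ with
\[
\log \EE_{X\sim\PP_\mu}\bigl[e^{\eta^T(\phi(X)-\mu)}\bigr] = \tfrac{1}{2}\,\eta^T \nabla^2 \psi\bigl(\theta(\mu) + \xi\eta\bigr)\,\eta.
\]
Hence the claim reduces to showing that the Hessian $\nabla^2 \psi$ --- which is the Fisher information / covariance of $\phi(X)$ --- is bounded in operator norm, uniformly, by some $\sigma < \infty$ on the set $\{\theta(\mu) + \xi\eta : \mu \in M,\ \|\eta\|^2 \le \delta,\ \xi\in[0,1]\}$.

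For part (i), the Gaussian location family has $\phi(x)=x$, $\psi(\theta) = \theta^2/2$, $\theta(\mu) = \mu$, so $\nabla^2\psi \equiv 1$. The identity above gives $\log \EE_\mu[e^{\eta(X-\mu)}] = \eta^2/2$ exactly, so \eqref{asd} holds with $\sigma = 1$ and arbitrary $\delta > 0$.

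For part (ii), the plan is to exploit the CINECSI assumption to get uniform control on the Hessian. The mean-value map $\mu : \bar\Theta \to \bar M$ is a real-analytic diffeomorphism onto the interior, so its inverse $\theta(\cdot)$ is continuous on $\mathrm{int}(\bar M)$; since $M$ is a compact subset of $\mathrm{int}(\bar M)$, the image $K_0 := \theta(M)$ is a compact subset of $\mathrm{int}(\bar\Theta)$. Compactness gives a positive distance $2\delta_0$ from $K_0$ to $\partial \bar\Theta$; pick $\delta > 0$ small enough that $\|\eta\|^2 \le \delta$ implies $\|\eta\| \le \delta_0$, and let $K := \{\theta' \in \RR^k : \mathrm{dist}(\theta', K_0) \le \delta_0\}$. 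Then $K$ is a compact subset of $\mathrm{int}(\bar\Theta)$, and $\nabla^2 \psi$ is continuous there (a standard fact about exponential families, see \cite{brown}), so
\[
\sigma := \sup_{\theta' \in K} \lambda_{\max}\bigl(\nabla^2 \psi(\theta')\bigr) < \infty.
\]
Combining with the Taylor identity yields \eqref{asd}.

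The main (and only) nontrivial obstacle is the uniform Hessian bound in part (ii), but the CINECSI assumption is tailored precisely to make this a matter of continuity on a compact set; no delicate estimate is required beyond that.
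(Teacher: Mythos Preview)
Your proposal is correct and follows essentially the same approach as the paper: both use the exponential-family MGF identity $\EE_\mu[e^{\eta^T(\phi(X)-\mu)}] = \exp(\psi(\theta(\mu)+\eta)-\psi(\theta(\mu))-\eta^T\mu)$, Taylor-expand $\psi$ to produce a quadratic remainder involving the Fisher information $\nabla^2\psi$, and then use the CINECSI hypothesis to enlarge $\theta(M)$ to a compact subset of $\mathrm{int}(\bar\Theta)$ on which the Hessian is uniformly bounded. Your write-up is in fact slightly more precise than the paper's in taking $\sigma$ to be the supremum of the largest eigenvalue of $\nabla^2\psi$ over the enlarged compact set (the paper writes $\sigma=\sup I(\theta')$ without making the scalar reduction explicit), but the argument is the same.
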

\begin{proof} 
$\phantom{x}$

\begin{enumerate}[(i)]
\item For the Gaussian location family $\phi(X)=X$. Inspecting the definition of the moment generating function, we immediately find that  for all $\eta\in\RR$
\[
\EE_{X\sim \PP_\mu}\left[{e^{\eta(X-\mu)}}\right] = e^{\eta^2/2}.
\]
Then \eqref{asd} is satisfied with $\sigma=1$ and any $\delta>0.$
\item Let $M$ be a CINECSI subset of $\bar M$. Consider the canonical parametrization of the exponential family with $\Theta=\theta(M)$ (where $\theta(\cdot)$ is as defined underneath (\ref{eq:helpthereader}). By Taylor expansion we have for every $\eta \in\RR^k$ and any $\theta\in\Theta$
\[
\EE_{X\sim \PP_{\theta}}\left[{e^{\eta^T\phi(X)}}\right]=e^{\psi(\theta+\eta)-\psi(\theta)}=e^{\eta^T\mu(\theta)+\eta^T I(\theta')\eta/2}, 
\]
where $I(\cdot)$ is Fisher information and $\theta'$ is between $\theta$ and $\theta+\eta$. Now we construct a set $B_\delta(0)=\{\eta\in \RR^k:\|\eta\|^2\leq\delta\}$ such that for all $\eta\in B_\delta(0)$ the Fisher information at $\theta'$ (located between $\theta$ and $\theta+\eta$) is bounded.

Notice that since $\mu(\cdot)=\theta^{-1}(\cdot)$ is continuous, $\Theta$ is a CINECSI subset of $\bar\Theta=\theta(\bar M)$. Hence, there exists $\delta>0$ and $\Theta^\delta$ such that $\Theta\subset \Theta^\delta$, $\Theta^\delta$ is a CINECSI subset of $\bar \Theta$, and
\[
\inf_{\theta\in\Theta, \theta'\in\partial \Theta^\delta}\|\theta-\theta'\|\geq\delta. 
\]
 Then for all $\eta\in B_\delta(0)$, we have $\theta'\in \Theta^\delta$. Since $\Theta^\delta$ is a CINECSI subset of $\bar \Theta$, the Fisher information is bounded on $\Theta^\delta$. Therefore, there exists $\sigma=\sup_{\theta'\in \Theta^\delta} I(\theta')>0$ such that
\[
\EE_{X\sim \PP_\mu}\br{e^{\eta^T(\phi(X)-\mu)}}\leq e^{\sigma\eta^T\eta/2}
\]
for all $\eta\in B_\delta(0)$ and all $\mu\in M$.
\end{enumerate}
\end{proof}

\subsubsection{General theorem}
In the following theorem we show that under Assumptions \ref{mle_form}
and \ref{inside} the strongly adversarial time-robust minimax rate for
the MLE is at most $n^{-1}\log\log n$. Note that below, the MLE
$\MMLE$ is defined relative to the full set $\bar{M}$, not the
potentially restricted set $M$. Also, observe that \eqref{tough_part}
directly follows from Theorem \ref{thm:upperbound}, since for the
Gaussian location family, Assumption \ref{mle_form} is satisfied for
$\bar M=\RR$.
\begin{thm}\label{thm:upperbound}
Let $\bar M$ be such that Assumption \ref{mle_form} is satisfied. Let $M\subseteq\bar M$ be such that Assumption \ref{inside} is satisfied. Then there exists a constant $C>0$ such that 
\begin{equation*}
\sup_{\mu \in M} \EE_{X^{\infty}\sim \PP_\mu}\left[ \sup_{n \in \NN} \frac{\|\mu- \MMLE_n \|^2}{f(n)}\right]\leq C, 
\end{equation*}
where $f(n)=n^{-1}\log\log n$ for $n\geq 3$ and $f(n)=1$ for $n=1, 2.$
\end{thm}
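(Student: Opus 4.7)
The plan is to control $\EE_{\PP_\mu}[\sup_n \|\mu - \MMLE_n\|^2/f(n)]$ uniformly in $\mu \in M$ by constructing a nonnegative supermartingale of the Darling--Robbins / Balsubramani / Howard type whose value realizes a law-of-the-iterated-logarithm boundary, and then applying Ville's inequality (equivalently, the E-value/p-value correspondence of \cite{shafer2011test}) to convert this into the desired tail bound on the squared risk. First, by Assumption \ref{mle_form}, $\MMLE_n = n^{-1}\sum_{i=1}^n \phi(X_i)$, so writing $Y_i = \phi(X_i) - \mu$ and $S_n = \sum_{i=1}^n Y_i$ gives $\|\mu - \MMLE_n\|^2 = \|S_n\|^2/n^2$. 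I would then observe that Assumption \ref{inside} implies, for each fixed $\eta \in \RR^k$ with $\|\eta\|^2 \leq \delta$, that
\[
L_n(\eta) = \exp\!\bigl(\eta^T S_n - \tfrac{n\sigma}{2}\|\eta\|^2\bigr)
\]
is a $\PP_\mu$-nonnegative supermartingale with $L_0(\eta)=1$, and that this holds uniformly in $\mu \in M$ since $\sigma,\delta$ do not depend on $\mu$.

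Next, to upgrade these single-$\eta$ bounds to a uniform-in-$n$ bound of LIL shape, I would take a convex combination across dyadic scales:
\[
M_n = \sum_{j \geq 1} w_j \int L_n(\eta)\, \pi_j(d\eta), \qquad w_j \asymp j^{-1-\alpha}, \; \sum_j w_j = 1,
\]
where each $\pi_j$ is a Gaussian measure on $\{\|\eta\|^2 \leq \delta\}$ whose covariance is calibrated to the time scale $n \asymp 2^j$ (so that the inner-exponent maximiser $\eta^\star \asymp S_n/(n\sigma)$ falls inside the support). A convex combination of nonnegative supermartingales is a nonnegative supermartingale; standard mixture computations along the lines of \cite{darling, balsubramani, howard} then show that on the range $2^j \leq n < 2^{j+1}$,
\[
M_n \;\geq\; c_1 \, j^{-1-\alpha} \, 2^{-jk/2} \exp\!\left(\frac{\|S_n\|^2}{2\sigma n}(1+o(1))\right),
\]
so that the event $\{M_n \geq e^t\}$ is implied by $\{\|S_n\|^2 \geq C n(\log\log n + t)\}$ for a suitable constant $C = C(\sigma, k, \alpha)$.

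Ville's inequality then gives $\PP_\mu(\sup_n M_n \geq e^t) \leq e^{-t}$, which translates into
\[
\PP_\mu\!\left(\sup_{n \geq 3} \frac{\|\mu - \MMLE_n\|^2}{f(n)} \geq C(1+t)\right) \leq e^{-t},
\]
uniformly in $\mu \in M$. Integrating this sub-exponential tail yields the required uniform expectation bound on the range $n \geq 3$. For $n \in \{1, 2\}$ one has $f(n)=1$, and Assumption \ref{inside} readily implies a uniform-in-$\mu$ bound on $\EE_{\PP_\mu}[\|\mu - \MMLE_n\|^2]$ (it forces $\phi(X) - \mu$ to be sub-Gaussian with uniformly bounded variance); combining the two regimes concludes the argument.

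The hard part will be Step 2: calibrating the per-scale priors $\pi_j$ and the weights $w_j$ to realise an iterated-logarithm boundary while respecting the \emph{local} sub-Gaussianity constraint $\|\eta\|^2 \leq \delta$ in Assumption \ref{inside}. This is exactly the bounded-support mixture construction developed in \cite{darling, balsubramani, howard}; their arguments can be imported here with only the additional bookkeeping needed to verify that $\sigma$ and $\delta$ are uniform over $\mu \in M$. Once such a supermartingale $M_n$ is in hand, Ville's inequality together with its reformulation in terms of E-values/p-values from \cite{shafer2011test} converts the supermartingale bound into the risk bound in a routine manner.
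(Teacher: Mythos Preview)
Your overall strategy is the same as the paper's: reduce to $S_n=\sum_i(\phi(X_i)-\mu)$, build a mixture over $\eta$ of the exponential supermartingales $L_n(\eta)=\exp(\eta^T S_n - n\sigma\|\eta\|^2/2)$ supported in $B_\delta(0)$ so that Assumption~\ref{inside} applies, and then convert the supermartingale control into a LIL-shaped tail bound that integrates to a finite expectation. So the architecture is right.

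There is, however, a concrete slip that would break the rate. In your displayed lower bound
\[
M_n \;\geq\; c_1\, j^{-1-\alpha}\, 2^{-jk/2}\, \exp\!\left(\frac{\|S_n\|^2}{2\sigma n}(1+o(1))\right),\qquad 2^j\le n<2^{j+1},
\]
the factor $2^{-jk/2}\asymp n^{-k/2}$ forces $\{M_n\ge e^t\}$ to be implied by $\|S_n\|^2\ge Cn(\log n + t)$, not $Cn(\log\log n + t)$; this yields an $n^{-1}\log n$ rate, not $n^{-1}\log\log n$. The fix is exactly the ``calibration'' you allude to: if $\pi_j=\NNN(0,\tau_j^2 I)$ with $\tau_j^2\asymp 1/(\sigma 2^j)$, then the Gaussian normalizations combine to $\bigl(1+n\sigma\tau_j^2\bigr)^{-k/2}$, which is a \emph{constant} on $n\in[2^j,2^{j+1})$, not $2^{-jk/2}$. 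With that correction the implication you state becomes valid and the rest of your integration-of-tails argument goes through. You should also check that the Gaussian maximizer $\eta^\star=S_n/(n\sigma+1/\tau_j^2)$ stays inside $B_\delta(0)$ on the event in question, so that the truncation to $B_\delta$ costs only a constant; this is where the restriction ``for $s$ (or $t$) large enough'' first enters.

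For comparison, the paper makes two different technical choices. First, instead of scale-indexed Gaussian priors it uses a single \emph{discrete} mixing distribution with atoms $\eta_i=c_0\sqrt{-\log\gamma_i/e^i}$ and weights $\gamma_i=1/(i(i+1))$; picking $i=\lfloor\log n\rfloor$ yields the same $\log\log n$ boundary without any Gaussian integral bookkeeping, and the $k$-dimensional case is handled by $2^{k-1}$ sign-patterned copies bounding $T_n=\sum_j|S_n^j|$ rather than $\|S_n\|^2$ directly. Second, the paper does not integrate Ville-type tail bounds; instead it invokes the ``calibration'' result of \cite{shafer2011test} that for any test supermartingale $(U_n)$ one has $\EE[\sup_n\sqrt{U_n}]\le 2$, which directly produces an expectation bound of the form $\EE\bigl[\sup_{n}\exp(cT_n/\sqrt{n\log\log n})\,\ind{A_c}\bigr]\le C$ and then Markov plus one integral finishes. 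Your Ville-plus-tail-integration route is equally valid but requires slightly more care to line up the $t$ and $\log\log n$ scales; the paper's square-root trick sidesteps that bookkeeping.
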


\subsubsection{Proof of Theorem \ref{thm:upperbound}}

Let $
S_n=\sum_{i=1}^n (\phi(X_i)-\mu(\theta))$. Notice that for $n\geq 3$ we have $n\log\log n\geq 1/4.$ Furthermore, there exists a constant $C'$ such that 
\[
\sum_{m=1}^{27} \|S_m\|^2\leq C'.
\]
Also, for every $\mu\in M$
\[
\EE_{X^{\infty}\sim \PP_\mu}\left[\sup_{n \in \NN} \frac{\|\mu- \MMLE_n \|^2}{f(n)}\right]\leq \EE_{X^{\infty}\sim \PP_{\mu}}\left[4\sum_{m=1}^{27} \|S_m\|^2+\sup_{n>27} \frac{\|S_{n}\|^2}{{n \log\log n }}\right].
\]
It is then sufficient to show that there exists a constant $C>0$ such that for every parameter $\theta \in \Theta=\theta(M)$ (where $\theta(\cdot)$ is as defined underneath (\ref{eq:helpthereader})),
\[
\EE_{X^{\infty}\sim \PP_\theta} \left[\sup_{n> 27} \frac{\|S_{n}\|^2}{{n \log\log n }}\right]\leq C.
\]
First, following \cite{shafer2011test}, we define a {\em test supermartingale\/}
$(U_n)_{n \in \NN}$ relative to filtration $( \FF_n)_{n \in \NN}$ and distribution $\PP$ to
be a nonnegative supermartingale relative to $( \FF_n )_{n \in \NN}$
with starting value bounded by $1$, i.e. $(U_n)_{n \in \NN}$ is a
test martingale iff for all $n \in \NN$, $U_n \geq 0$ a.s., $\EE_{\PP} [U_n
 \mid \FF_{n-1}] \leq U_{n-1}$, and
$\EE[U_1] \leq 1$. 
The following lemma is an immediate consequence of combining two of
\cite{shafer2011test}'s fundamental results: 
\begin{lem}
\label{shafer}
Suppose that $(U_n)_{n \in \NN}$ is a test supermartingale under
distribution $\PP$. Then
\[
\EE_{\PP} \br{\sup_{n\in\NN}\sqrt{U_n}/2} \leq 1.
\]
\end{lem}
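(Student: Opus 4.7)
The plan is to combine two elementary facts that appear as ``fundamental results'' in \cite{shafer2011test}. The first is Ville's maximal inequality for nonnegative supermartingales: since $(U_n)_{n\in\NN}$ is a test supermartingale under $\PP$ (so $\EE_\PP[U_1]\leq 1$), one has $\PP(\sup_{n\in\NN} U_n \geq c)\leq 1/c$ for every $c>0$. Writing $S := \sup_{n\in\NN} U_n$, this is exactly the statement that the random variable $P := \min(1, 1/S)$ is a $p$-variable under $\PP$, i.e.\ $\PP(P\leq\alpha)\leq\alpha$ for all $\alpha\in(0,1]$. This is the bridge from test supermartingales to $p$-values.

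The second ingredient is a $p$-to-$E$ calibrator of the form $p\mapsto 1/(2\sqrt{p})$: for any $p$-variable $P$ one has $\EE_\PP\br{1/(2\sqrt{P})}\leq 1$. A one-line tail-integral verification would suffice,
\[
\EE_\PP\br{1/(2\sqrt{P})} \;=\; \int_0^\infty \PP\bigl(P<1/(4t^2)\bigr)\,\dd t \;\leq\; \int_0^\infty \min\bigl(1, 1/(4t^2)\bigr)\,\dd t \;=\; 1,
\]
using the $p$-variable property $\PP(P<\alpha)\leq\alpha$.

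Chaining the two with $P=\min(1, 1/S)$ gives $\EE_\PP\br{1/(2\sqrt{P})}\leq 1$. To conclude I would unfold: on $\{S\geq 1\}$ we have $1/(2\sqrt{P})=\sqrt{S}/2$, and on $\{S<1\}$ we have $1/(2\sqrt{P})=1/2\geq\sqrt{S}/2$; hence $\sqrt{S}/2\leq 1/(2\sqrt{P})$ pointwise. Since $t\mapsto\sqrt t$ is nondecreasing, $\sqrt{S}=\sqrt{\sup_n U_n}=\sup_n \sqrt{U_n}$, so $\EE_\PP\br{\sup_n\sqrt{U_n}/2}\leq 1$, which is the claim. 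There is no real obstacle: the index set $\NN$ is countable so measurability of $\sup_n U_n$ is automatic, and the only mildly delicate point is the truncation $P=\min(1, 1/S)$ that handles the event $\{S<1\}$ on which $1/S$ would formally exceed $1$.
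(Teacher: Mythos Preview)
Your proof is correct and follows essentially the same route as the paper's: Ville's maximal inequality turns $1/\sup_n U_n$ into a $p$-value, and the calibrator $p\mapsto 1/(2\sqrt p)$ turns that into an $E$-variable, which is exactly what the paper cites from \cite{shafer2011test} (their Theorem~2(1) and Theorem~3(1) with (8) at $\alpha=1/2$). The only cosmetic difference is that the paper works with $V=1/S$ directly (which can exceed $1$) and simply invokes the cited results, whereas you truncate to $P=\min(1,1/S)$ and verify the calibrator bound by an explicit tail integral; your extra care with the event $\{S<1\}$ is harmless and makes the argument self-contained.
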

\begin{proof}
Let $V = (1/(\sup_{n \in \NN} U_n)$. From Theorem 2, part (1) of
 \cite{shafer2011test} we have that, for all $0 \leq \alpha \leq 1$, 
 $P(V \leq \alpha) \leq \alpha$, i.e. (the value taken by) $V$ can be
 interpreted as a $p$-value. Now Theorem 3, part (1) of
 \cite{shafer2011test}, together with (8) in that paper instantiated
 to $\alpha = 1/2$, gives that $1/(2 \sqrt{V})$ is an {\em
 $E$-variable}, i.e. $\EE [1/(2 \sqrt{V})] \leq 1$, and the first result
above follows. [In the terminology of \cite{shafer2011test}, a random
 variable $W$ with $\EE[1/W] \leq 1$ is called ``Bayes factor''. In
 recent publications, the terminology has changed to calling $1/W$ an
 $E$-variable and its value $E$-value
 \citep{vovk2019combining, GrunwaldHK19}.]
\end{proof}
Consider $\delta>0$ and $\sigma>0$ such that Assumption \ref{inside} is satisfied. Let 
\[
B_\delta(0)=\{\eta\in \RR^k:\|\eta\|^2\leq\delta\}. 
\]
For a probability distribution on $B_\delta(0)$ with the density function $\gamma:B_\delta(0)\to\RR$ define 
\[
Z_n=\int_{B_\delta(0)}\gamma(\eta)e^{\eta^T S_n-n\sigma\eta^T\eta/2}d\eta.
\]
Additionally, let $Z_0=1.$ Due to the properties of the conditional expectation and Assumption \ref{inside} we know that $(Z_n)_{n \in \NN}$ is a test
supermartingale relative to filtration $(\sigma(X^n))_{n\in\NN}$. Then Lemma \ref{shafer} has the following corollary.
\begin{cor}
For any distribution $\gamma$ on $B_\delta(0)$
\label{mart}
\begin{equation}\label{eq:yes}
\EE_{X^{\infty} \sim \PP_{\theta}} \left[\sup_{n\in\NN}\sqrt{Z_{n}}\right] \leq 2. 
\end{equation} 
\end{cor}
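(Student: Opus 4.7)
The plan is to observe that the corollary is an immediate consequence of Lemma~\ref{shafer} once we verify that $(Z_n)_{n \in \NN}$ is indeed a test supermartingale relative to $(\FF_n)_{n \in \NN} = (\sigma(X^n))_{n \in \NN}$ under $\PP_\theta$. The paper asserts this in the sentence preceding the corollary, so the task is really just to spell out that verification, after which the bound $\EE[\sup_n \sqrt{Z_n}/2] \leq 1$ follows by direct application, and multiplying by $2$ yields \eqref{eq:yes}.

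First I would check the two easy conditions. Nonnegativity of $Z_n$ is clear since $\gamma \geq 0$ and the exponential factor is positive, and $Z_0 = 1$ by definition so $\EE[Z_0] \leq 1$. The real content is the one-step supermartingale inequality $\EE[Z_n \mid \FF_{n-1}] \leq Z_{n-1}$. Writing $\mu = \mu(\theta)$ and using $S_n = S_{n-1} + (\phi(X_n) - \mu)$, together with Fubini--Tonelli (justified by nonnegativity of the integrand) to interchange the conditional expectation and the integral over $\eta$, I would compute
\begin{align*}
\EE[Z_n \mid \FF_{n-1}]
&= \int_{B_\delta(0)} \gamma(\eta)\, e^{\eta^T S_{n-1} - n\sigma \eta^T \eta/2}\, \EE_{X \sim \PP_\mu}\!\left[e^{\eta^T (\phi(X) - \mu)}\right] d\eta.
\end{align*}
Since the integration is restricted to $\eta \in B_\delta(0)$, Assumption~\ref{inside} applies pointwise and gives $\EE_{X \sim \PP_\mu}[e^{\eta^T(\phi(X)-\mu)}] \leq e^{\sigma \eta^T \eta/2}$. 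Substituting this bound collapses the two quadratic terms in $\eta$ from $-n\sigma \eta^T\eta/2 + \sigma\eta^T\eta/2$ to $-(n-1)\sigma\eta^T\eta/2$, so the right-hand side equals $Z_{n-1}$. Hence $(Z_n)$ is a test supermartingale.

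Finally, applying Lemma~\ref{shafer} with $U_n = Z_n$ yields $\EE_{\PP_\theta}[\sup_{n \in \NN} \sqrt{Z_n}/2] \leq 1$, which is exactly \eqref{eq:yes} after multiplying through by $2$. I do not anticipate any serious obstacle: the only delicate point is the Fubini interchange, and this is automatic because $\gamma$ is a probability density on the bounded set $B_\delta(0)$ and every factor in the integrand is nonnegative and measurable. The role of truncating the prior to $B_\delta(0)$ is precisely to ensure that Assumption~\ref{inside} can be applied under the integral sign for every $\eta$ in the support of $\gamma$.
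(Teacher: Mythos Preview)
Your proposal is correct and follows exactly the route the paper indicates: verify that $(Z_n)$ is a test supermartingale (nonnegativity, starting value $1$, and the one-step inequality via Assumption~\ref{inside} and Fubini), then invoke Lemma~\ref{shafer}. The paper states this verification in one sentence and leaves the details implicit; you have simply spelled them out.
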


By choosing the right distribution on $\eta$, i.e. the right $\gamma$, we can show that (\ref{eq:yes}) implies the following lemma (we provide the proof for this lemma in the next subsection).

\begin{lem}
\label{main}
Let $S_n=(S_n^1, \dots, S_n^k)^T$ and $T_n=|S_n^1|+\dots+|S_n^k|$. For every $c < \frac{2\sqrt 2\delta}{k}$ and
for all $\theta\in \Theta$, 
\[
\EE_{X^\infty\sim\PP_\theta} \left[\sup_{n>27}e^{c\frac{T_{n}}{\sqrt{n\log\log{n}}}}\mathbbm{1}_{A_c}\right] \leq e^{K_1+K_2 c^2}, 
\]
where $K_1= 1.5+(k+1)\log2$, $K_2=18\sigma k$, and $A_c=\left\{\sup_{n>27}\frac{T_n}{\sqrt{n\log\log n}}\geq \frac{1}{2c}\left(2K_2c^2+3\right)\right\}$.
\end{lem}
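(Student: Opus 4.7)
The plan is to apply Corollary \ref{mart} with a carefully chosen mixture $\gamma$ whose induced test supermartingale $Z_n$ lower-bounds the LIL-rescaled exponential $e^{cU_n}$, where $U_n := T_n/\sqrt{n\log\log n}$, and then to use the event $A_c$ to absorb the polynomial-in-$\log n$ correction that appears. I would take $\gamma$ to be a product mixture
\[
\gamma \;=\; 2^{-k}\sum_{s\in\{-1,+1\}^k}\sum_{j\geq 1} p_j\, \delta_{b_j s},
\]
with $(b_j)_{j\geq 1}$ a geometrically decreasing sequence of positive scales (for instance $n_j := 2^j$ and $b_j := 2c/\sqrt{n_j\log\log n_j}$) subject to $b_j^2 k\leq \delta$ --- the condition $c<2\sqrt{2}\delta/k$ in the hypothesis is exactly what ensures this --- and with weights $p_j>0$ summing to $1$ and satisfying $-\log p_j\asymp \log j$, for example $p_j\propto j^{-2}$.

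The essential algebraic step is the cosh-identity
\[
\sum_{s\in\{-1,+1\}^k} e^{b s^T S_n} \;=\; \prod_{i=1}^k 2\cosh(bS_n^i) \;\geq\; e^{b T_n},
\]
which converts the sign-averaging into the one-sided sum $T_n=\sum_i|S_n^i|$. Together with $\|bs\|^2=kb^2$, this gives $Z_n \geq 2^{-k}\sum_{j\geq 1} p_j\, e^{b_j T_n - n\sigma k b_j^2/2}$. Keeping only the summand with $j=j(n)$ determined by $n\in[n_{j(n)},n_{j(n)+1})$ and using $b_{j(n)}T_n\geq 2cU_n$, $n\sigma k b_{j(n)}^2/2\lesssim \sigma k c^2/\log\log n$, and $-\log p_{j(n)}\lesssim 2\log\log n + O(1)$ yields a lower bound of the shape $\sqrt{Z_n}\geq C_0\, e^{cU_n - g(n)}$, where $g(n)$ is dominated by the $\log\log n$ penalty coming from $-\log p_{j(n)}$ and from the discretization error in $n\sigma k b_{j(n)}^2/2$.

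The next step is completion of the square, $cU_n\leq U_n^2\log\log n/(4\sigma k) + \sigma k c^2/\log\log n$. On the event $A_c$, where $U_n\geq K_2 c + 3/(2c)$, the quadratic term $U_n^2\log\log n/(4\sigma k)$ exceeds $cU_n$ by at least $K_2 c^2 + 3/2$; the threshold is engineered so that this excess dominates $g(n)$ with slack $K_2 c^2 + 3/2$. One obtains the pointwise inequality $e^{cU_n}\leq C_1 e^{K_2 c^2 + 3/2}\sqrt{Z_n}$ valid on $A_c$ for all $n>27$, and hence
\[
\EE_\theta\!\left[\sup_{n>27}e^{cU_n}\mathbbm{1}_{A_c}\right] \;\leq\; C_1 e^{K_2 c^2+3/2}\cdot \EE_\theta\!\left[\sup_n \sqrt{Z_n}\right] \;\leq\; 2C_1 e^{K_2 c^2+3/2} \;=\; e^{K_1+K_2 c^2},
\]
by Corollary \ref{mart}, where $K_1=1.5+(k+1)\log 2$ collects the sign-averaging prefactor $2^{-k/2}$ from $\sqrt{Z_n}$, the Markov factor $2$, and the $3/2$ completion-of-the-square slack.

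The main technical obstacle is designing the mixture so that the lower bound on $Z_n$ tracks the LIL envelope $e^{U_n^2\log\log n/(4\sigma k)}$ to within polynomial-in-$\log n$ factors (a uniform prior on $B_\delta(0)$ would only give polynomial-in-$n$ factors, off by a whole $\log n$); this is what forces the Darling--Robbins style mixture weights with $-\log p_j\asymp \log j$ together with the geometric scale spacing. The multiplicative constant $K_2 = 18\sigma k$ is chosen well above the information-theoretic $2\sigma k$ to cover the discretization error from matching $b_{j(n)}$ to the optimal $T_n/(n\sigma k)$, and the $+3/2$ that appears in both the threshold $K_2 c+3/(2c)$ and in $K_1$ is the same completion-of-the-square slack used twice.
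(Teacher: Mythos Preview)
Your overall architecture --- a discrete Darling--Robbins mixture, the cosh identity $\prod_i 2\cosh(bS_n^i)\ge e^{bT_n}$ to pass from signed sums to $T_n$, and the appeal to Corollary~\ref{mart} --- is sound and in the spirit of the paper's argument. The gap is the choice of scale $b_j = 2c/\sqrt{n_j\log\log n_j}$: it is too small by a factor of order $\log\log n$, and the completion-of-the-square step you propose does not repair this.

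With your $b_j$ and the summand $j=j(n)$ you correctly obtain $\sqrt{Z_n}\ge C_0\,e^{cU_n-g(n)}$ where $g(n)\asymp\log\log n$ comes from $-\tfrac12\log p_{j(n)}$ (the variance term $n\sigma k b_{j(n)}^2/4\to 0$). This gives only $e^{cU_n}\le C_0^{-1}e^{g(n)}\sqrt{Z_n}$, with $e^{g(n)}$ unbounded in~$n$. The quantity $U_n^2\log\log n/(4\sigma k)$ that you then invoke never appears in your lower bound on~$Z_n$: it would arise from the \emph{optimal} choice $b=T_n/(n\sigma k)$, but retaining $j=j(n)$ pins $b_{j(n)}\asymp c/\sqrt{n\log\log n}$, which on~$A_c$ differs from the optimal~$b$ by a factor $\asymp U_n\log\log n/(c\sigma k)\to\infty$. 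This is not a discretization error but a scale mismatch, and no inequality relating $cU_n$ to $U_n^2\log\log n$ can remove the $e^{g(n)}$ factor once it is there.

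The fix, which is exactly what the paper does, is to take the scale at level $i=\lfloor\log n\rfloor$ to be $\eta_i=c_0\sqrt{-\log\gamma_i/e^i}\asymp c_0\sqrt{\log\log n/n}$ rather than $\asymp c_0/\sqrt{n\log\log n}$. Then the linear term $\eta_i T_n\asymp c_0\,U_n\log\log n$, the variance term $n\sigma k\eta_i^2/2\asymp\sigma k c_0^2\log\log n$, and the weight penalty $-\log\gamma_i\asymp\log\log n$ are all of the \emph{same} order, and the exponent in~$Z_n$ factors as $\log\log n\cdot\bigl[\text{const}\cdot U_n-\text{const}\bigr]$. At the~$n$ realising $\sup_n U_n$ this bracket is nonnegative on~$A_c$, so $\log\log n$ may be replaced by~$1$ (using $n>27$), and the claimed bound follows from Corollary~\ref{mart} without any completion of the square. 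Your cosh device is a clean alternative to the paper's sum over sign patterns $\rho\in\{-1,1\}^k$; only the scale $b_j$ needs to change.
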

By Markov's inequality for any $a>0$ and any $c>0$
\begin{multline*}
\PP\left[\ind{A_c}\sup_{n>27}\frac{T_{n}^2}{{n\log\log{n}}}\geq a\right]=\PP\left[\ind{A_c}\sup_{n>27}e^{{c\frac{T_{n}}{\sqrt{n\log\log{n}}}}}\geq e^{c\sqrt a}\right]\leq\\ 
\leq e^{-c\sqrt a}\EE\left[\ind{A_c}\sup_{n>27}e^{{c\frac{T_{n}}{\sqrt{n\log\log{n}}}}}\right]. 
\end{multline*}
Combining it with Lemma 5.2 we get that for all $c < \frac{2\sqrt 2\delta}{k}$
\begin{multline*}
\EE\left[\ind{A_c}\sup_{n>27}\frac{T_{n}^2}{{n\log\log{n}}}\right]=\int_{0}^\infty \PP\left[\ind{A_c}\sup_{n>27}\frac{T_{n}^2}{{n\log\log{n}}}\geq a\right] da \leq \\
\leq \int_0^\infty e^{-c\sqrt a+K_1+K_2c^2}da=2c^{-2}e^{K_1+K_2 c^2}. 
\end{multline*}
The minimum of the RHS is achieved, when $c=\min\left\{ \frac{2\sqrt 2\delta}{k}, \frac1{\sqrt {K_2}}\right\}.$
Also, 
\[
\|S_n\|^2=(S_n^1)^2+\dots(S_n^k)^2\leq (|S_n^1|+\dots+|S_n^k|)^2=T_n^2. 
\]
Therefore, for any $\theta\in \Theta$ and for $c=\min\left\{ \frac{2\sqrt 2\delta}{k}, \frac1{\sqrt {K_2}}\right\}$ we have
\begin{multline*}
\EE_{X^{\infty}\sim \PP_\theta} \left[\sup_{n> 27} \frac{\|S_{n}\|^2}{{n \log\log n }}\right] \leq
\EE_{X^{\infty}\sim \PP_\theta} \left[\ind{A_c}\sup_{n> 27} \frac{T_n^2}{{n \log\log n }}\right]+\\+
\EE\left[\ind{\left\{\sup_{n>27}\frac{T_n}{\sqrt{n\log\log n}}< \frac{1}{2c}\left(2K_2c^2+3\right)\right\}}\sup_{n>27}\frac{T_{n}^2}{{n\log\log{n}}}\right]\leq\\
\leq 2c^{-2}e^{K_1+K_2 c^2}+c^{-1}\left(2K_2c^2+3\right)/2 .
\end{multline*}
This finishes the proof of the theorem.

\subsubsection{Proof of Lemma \ref{main}}

For simplicity of exposition we only provide the proof for $k=1$, the proof for $k>1$ can be found in the \hyperref[appn]{Appendix}. 

Consider a discrete probability measure on $B_\delta(0)$ with density
\[
\gamma(\eta)=\sum_{i\in\NN}\gamma_i\ind{\eta=\eta_i}, 
\]
where $\gamma_i=\frac1{i(i+1)}$ and $\eta_i=c_0\sqrt{\frac{-\log\gamma_i}{e^i}}$ for a constant $c_0>0$ such that $\eta_i\in B_\delta(0)$ for all $i\in\NN$. Notice that the above holds for all $c_0<\delta$.
Then for any fixed $n>27$
\[
Z_n=\sum_{i=1}^{\infty}\gamma_ie^{\eta_i S_n-n\sigma\eta_i^2/2}\geq \max_{i\in\NN} \gamma_i e^{\eta_i S_n-n\sigma\eta_i^2/2}. 
\]
Let $i_0=\lfloor \log n\rfloor$, where
$
\lfloor x\rfloor=\max\{m\in\NN: m\leq x\}. 
$
Then we have
\begin{multline}
\label{tech}
Z_{ n } \geq \gamma_{i_0}e^{\eta_{i_0}S_ n - n \sigma\eta^2_{i_0}/2}\geq\\
e^{-\log\left(\llogn(\llogn+1)\right)} \cdot e^{-\frac{ n \sigma c_0^2\log\left(\llogn(\llogn+1)\right)}{2e^{\llogn}}} 
\cdot e^{c_0\sqrt{\frac{\log\left(\llogn(\llogn+1)\right)}{e^{\llogn}}}S_ n }\mathbbm{1}_{S_ n \geq0}. 
\end{multline}
Note that for $n\geq 3$, we have 
$
\log n\leq2\log(n-1)$. 
Also $\log n >3$, when $ n >27$. Therefore, 
\begin{equation}\label{eq:notleftmosta}
\log\left(\llogn(\llogn+1)\right)\geq \log(\log n -1)+\log\log n \geq 3/2\log\log n . 
\end{equation}
On the other hand, 
\begin{equation}\label{eq:leftmost}
\log\left(\llogn(\llogn+1)\right)\leq \log\log n +2\log\log n =3\log\log n . 
\end{equation}
Additionally, 
\begin{equation}\label{eq:notleftmostb}
 n /3\leq e^{\log n -1}\leq e^{\llogn}\leq e^{\log n +1}\leq 3 n . 
\end{equation}
We use (\ref{eq:leftmost}) to rewrite the first factor in (\ref{tech});
(\ref{eq:notleftmosta}) and (\ref{eq:notleftmostb}) to rewrite the second and third factor in (\ref{tech}). Then for any $n>27$
$$
Z_{ n } \geq e^{-3\log\log n -(9\sigma c_0^2/2)\log\log n +c_0\sqrt{\frac{\log\log n }{2 n }}S_ n }\mathbbm{1}_{S_ n \geq0}
= e^{u( n , S_{ n })} \cdot \mathbbm{1}_{S_ n \geq0}. 
$$
where
$$u( n , s) = \frac{c_0\sqrt2}{2}\log\log n \left(\frac{s}{\sqrt{ n \log\log n }}- \frac{\sqrt{2}}{2c_0}\left(9\sigma c_0^2+6\right) \right).
$$
Similarly, by defining $Z'_{n}$ just as $Z_n$ but now relative to a distribution with the same $\gamma_i$ but now $\eta_i=-c_0\sqrt{\frac{-\log\gamma_i}{e^i}}$ (rather than $\eta_i=c_0\sqrt{\frac{-\log\gamma_i}{e^i}}$ as before) for any $n>27$ we get
$
Z'_{ n } \geq e^{u( n , -S_{ n })} \cdot \mathbbm{1}_{S_ n <0}
$.
This equation and the previous one can be re-expressed as:
$$
\sqrt{Z_{ n}} \geq e^{u( n, S_{ n})/2} \cdot \mathbbm{1}_{S_ n\geq0}
\text{\ \ \ and\ \ \ }
\sqrt{Z'_{ n}} \geq e^{u( n, -S_{ n})/2} \cdot \mathbbm{1}_{S_n<0.}
$$
Since $T_n=|S_n|$, we combine the previous two equations with taking the supremum to get: 
\begin{multline*}
\sup_{n>27}e^{u(n, T_{ n})/2}\leq
 \sup_{n>27} e^{u( n, S_{ n})/2} \cdot \mathbbm{1}_{S_ n\geq 0}
 + \sup_{n>27} e^{u( n, - S_{ n})/2} \cdot \mathbbm{1}_{ S_ n < 0} \leq \\
 \leq\sup_{n>27}\sqrt{Z_{ n}} + \sup_{n>27}\sqrt{Z'_{ n}}.
 \end{multline*}
We can now invoke Corollary \ref{mart}, which gives us
$$
\EE\left[ \sup_{n>27} e^{u(n, T_{n})/2}\right] \leq {2} + {2} = 4.
$$
Observe that on the set $A_{c_0}=\left\{\sup_{n>27}\frac{T_n}{\sqrt{n\log\log n}}\geq \frac{\sqrt 2}{2c_0}\left(9\sigma k c_0^2+6\right)\right\}$ the expression inside the brackets in $u(n, T_n)$ is positive. Since $\log\log n>1$, when $n>27$, by writing out $u(\cdot, \cdot)$ in full we get:
\[
\EE\left[\sup_{n>27}
e^{\frac{c_0\sqrt2}{4}\frac{T_n}{\sqrt{n\log\log n}}-\frac{1}{4}
 \cdot \left(9\sigma c_0^2+6\right)}\mathbbm{1}_{A_{c_0}} \right]\leq 4.
\]
The desired result follows by taking $c=c_0/\sqrt{8}$. 
\smallskip

\subsection{Proof of {\it(ii)} in Theorem \ref{thm:exp}}
To prove the lower bound we first show in Lemma \ref{lem:lil} that by the law of iterated logarithm there is a constant $c>0$ such the distance between the (slightly modified) MLE and the truth is at least $c\cdot f(n)$ infinitely often. Then, in Lemma \ref{lem:twocases} we show that for each estimator either (\ref{eq:friday0}) and (\ref{eq:friday}) holds with some constant dependent on $c$ and the probability of being `close' to the MLE; or the estimator is `far' from the MLE infinitely often. Therefore, we only need to consider the latter estimators. For those estimators in Section \ref{br} we lower bound the minimax risk by the Bayes risk (a standard trick in minimax theory). Finally, in Section \ref{sec:stop} for each estimator we introduce a suitable stopping time such that the Bayes risk is bounded below by a constant that again depends on $c$ introduced above.

 \subsubsection{Reducing the number of considered estimators}
 
Consider a standard Gaussian prior $W$ (i.e.\ $\NNN(0, 1)$) on the parameter $\mu\in\RR$. Let $\tilde{\mu}(x^n) = {\EE}_{\mu^\star \sim W \mid X^n =x^n}\br{\mu^\star}$ be the posterior mean based on data $X^n = x^n$. Notice that 
\[
\tilde{\mu}(X^n)=\frac{n}{n+1}\MMLE(X^n) = \left(1-\frac1{n+1}\right) \sum_{i=1}^n X_i . 
\]
Let $\tilde\mu_n=\tilde\mu(X^n)$ and define the events $\{\cF_{\mu, n} \}_{n \in \naturals}$ as
$$\cF_{\mu, n} \Leftrightarrow (\mu - \tilde{\mu}_n)^2 \geq c \cdot f(n)
$$
for a fixed $c > 0$. In the next lemma we use the law of iterated logarithm to show that $\cF_{\mu, n}$ happens infinitely often with probability one.
\begin{lem}
\label{lem:lil}
 There exists $c>0$ such that for all $\mu \in \reals$, 
\begin{equation}\label{eq:physics}
\PP_{\mu}\left[ \cF_{\mu, n} \ \text{\ i.o.\ } \right] = 1. 
\end{equation}
\end{lem}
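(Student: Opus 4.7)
The goal is to show that under $\PP_\mu$, the event $\cF_{\mu,n} = \{(\mu - \tilde\mu_n)^2 \geq c f(n)\}$ occurs infinitely often almost surely, with a constant $c > 0$ independent of $\mu$. The natural route is to transfer the classical law of iterated logarithm for the MLE $\MMLE_n = n^{-1}\sum_{i=1}^n X_i$ to the posterior mean $\tilde\mu_n$, exploiting the fact that the two differ only by a vanishing shrinkage factor. Concretely,
\[
\tilde\mu_n - \mu \;=\; \frac{n}{n+1}\bigl(\MMLE_n - \mu\bigr) - \frac{\mu}{n+1},
\]
so a LIL for $\MMLE_n - \mu$ should carry over to $\tilde\mu_n - \mu$ up to a universal constant.

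\textbf{Main step.} Under $\PP_\mu$ the variables $X_i - \mu$ are i.i.d.\ $\NNN(0,1)$, so by the Hartman--Wintner LIL (cf.\ \cite{lil}),
\[
\limsup_{n\to\infty} \frac{|\MMLE_n - \mu|}{\sqrt{2\log\log n / n}} \;=\; 1 \quad \PP_\mu\text{-a.s.}
\]
Hence for any fixed $c_0 \in (0,\sqrt{2})$, the event $E_n := \{|\MMLE_n - \mu| \geq c_0\sqrt{\log\log n / n}\}$ occurs infinitely often $\PP_\mu$-a.s. Combining this with the decomposition above via the reverse triangle inequality,
\[
|\tilde\mu_n - \mu| \;\geq\; \frac{n}{n+1}\,|\MMLE_n - \mu| - \frac{|\mu|}{n+1}.
\]
For each fixed $\mu$, the deterministic quantity $|\mu|/(n+1)$ is $o\bigl(\sqrt{\log\log n/n}\bigr)$, so there exists a deterministic $N(\mu)$ with $|\mu|/(n+1) \leq \tfrac{1}{2}c_0\sqrt{\log\log n/n}$ for all $n \geq N(\mu)$. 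On $E_n \cap \{n \geq \max(N(\mu),3)\}$ we then obtain
\[
|\tilde\mu_n - \mu| \;\geq\; c_0\sqrt{\frac{\log\log n}{n}}\left(\frac{n}{n+1} - \frac{1}{2}\right) \;\geq\; \frac{c_0}{4}\sqrt{\frac{\log\log n}{n}},
\]
and squaring yields $(\mu - \tilde\mu_n)^2 \geq c\cdot f(n)$ with $c := c_0^2/16$.

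\textbf{Uniformity and obstacles.} The only delicate point is that the same $c$ must serve \emph{every} $\mu \in \RR$ simultaneously. Although $N(\mu)$ depends on $\mu$, this is harmless: we only need i.o.\ behaviour under each $\PP_\mu$ separately, and for every fixed $\mu$, $\PP_\mu$-almost every sample path both satisfies $E_n$ for infinitely many $n$ and eventually satisfies $n \geq N(\mu)$, so their intersection holds i.o.\ almost surely. The uniform constant $c = c_0^2/16$ then works for every $\mu \in \RR$. I do not foresee a serious obstacle beyond invoking the LIL; the remainder is elementary bookkeeping around the shrinkage factor $n/(n+1)$ and the subdominant correction $\mu/(n+1)$.
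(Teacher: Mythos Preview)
Your proof is correct and follows essentially the same approach as the paper: invoke the law of the iterated logarithm for $\MMLE_n$ and transfer the conclusion to $\tilde\mu_n$ via the shrinkage relation $\tilde\mu_n=\tfrac{n}{n+1}\MMLE_n$, absorbing the deterministic $|\mu|/(n+1)$ correction into the constant. The only stylistic difference is that the paper argues by contradiction (assuming the i.o.\ event fails for every $c$ and deriving a contradiction with the LIL for $\MMLE_n$), whereas you argue directly; the substance is the same.
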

\begin{proof}
Assume that $\PP_{\mu}\left[ \cF_{\mu, n} \ \text{\ i.o.\ } \right] \neq 1. $ Then for every $c>0$ there exists $\mu\in\RR$ and $\delta>0$ such that 
\begin{equation}
\label{eq:as}
\PP_\mu\left[\exists\ N>0 \text{ s.t. } \forall n>N\, \, (\mu-\tilde\mu_n)^2\leq cf(n)\right]\geq\delta. 
\end{equation}
Fix $c>0$ and consider $\mu$ and $\delta$ such that \eqref{eq:as} is satisfied. Notice that
\[
|\mu-\MMLE_n|\leq \left|\mu-\MMLE_n+\frac1n\mu\right|+\frac1{n}|\mu|=\frac{n+1}{n}\left|\mu-\tilde\mu_n\right|+\frac1{n}|\mu|
\]
When $(\mu-\tilde\mu_n)^2\leq cf(n)$ and $n>\max\{3, \mu^2/c\}$ we have
\[
|\mu-\MMLE_n|\leq\frac{n+1}{n}\sqrt{cf(n)}+\frac1n|\mu|\leq \sqrt{3cf(n)}. 
\]
Therefore, 
\[
\PP_\mu\left[\exists\ N>0 \text{ s.t. } \forall n>\max\{3, \mu^2/c, N\}\, \, (\mu-\MMLE_n)^2\leq 3cf(n)\right]\geq\delta. 
\]
However, since $\MMLE=\frac1n\sum_{i=1}^n X_i$, we can apply the law of the iterated logarithm (see e. g. \cite{lil}), according to which for any $c'\in(0, 1)$
\[
\PP_\mu\left[\forall N>0\, \, \exists n>N \text{ s. t. } (\mu-\MMLE)^2\geq c'f(n)\right]=1. 
\]
Choosing $c\in(0, 1/3)$ leads to a contradiction, which proves the result. 

\end{proof}

Let $c$ be such that (\ref{eq:physics}) holds. Furthermore, let $g: \naturals \rightarrow \reals^+$ be any non-increasing function with $f(n) / g(n) \rightarrow \infty$. Finally, for a fixed estimator $\hat\mu$ define the events $\cG_n$ 
$$
\cG_n \ \Leftrightarrow \ (\tilde{\mu}(X^n) -\hat{\mu}(X^n))^2 \geq \frac{1}{2} \cdot c \cdot f(n). 
$$

In the following lemma we show that we only need to consider estimators for which $\PP_{\mu} ({\cG}_n \ \text{\rm i. o}) = 1$ holds for all $\mu\in\RR$. 
\begin{lem}
\label{lem:twocases}
 Let $\hat\mu$ be an arbitrary estimator. Then either (\ref{eq:friday0}) and (\ref{eq:friday}) holds, 
or
\begin{equation}\label{eq:sunday}
\text{for all $\mu\in\RR$:\ } \PP_{\mu} [{\cG}_n \ \text{\rm i.o. }] = 1. 
\end{equation}
\end{lem}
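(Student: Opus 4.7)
I will prove the contrapositive: if (\ref{eq:sunday}) fails for some $\mu_0\in\RR$, I derive (\ref{eq:friday0}) and (\ref{eq:friday}) with a positive constant (possibly estimator-dependent; the uniform $C$ in Theorem \ref{thm:exp} would then be recovered by combining with the complementary branch of the proof, where (\ref{eq:sunday}) holds). Failure of (\ref{eq:sunday}) means there is a $\mu_0\in\RR$ with $\PP_{\mu_0}[\cG_n\text{ i.o.}]<1$, so by complementarity there exist $\delta>0$ and $N_0\in\NN$ with
\[
\PP_{\mu_0}\bigl[\,\forall n\geq N_0:\ (\tilde\mu_n-\hat\mu_n)^2<(c/2)f(n)\,\bigr]\geq\delta,
\]
where $c$ is the constant from Lemma \ref{lem:lil}.

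\textbf{Triangle inequality on a good event.} By Lemma \ref{lem:lil} we have $\PP_{\mu_0}[\cF_{\mu_0,n}\text{ i.o.}]=1$, so combining with the display above via $\PP(A\cap B)\geq\PP(A)+\PP(B)-1$ gives
\[
\PP_{\mu_0}(E)\geq\delta,\qquad E\;:=\;\{\cF_{\mu_0,n}\text{ i.o.}\}\,\cap\,\{\forall n\geq N_0:\ (\tilde\mu_n-\hat\mu_n)^2<(c/2)f(n)\}.
\]
On $E$ there are infinitely many $n\geq N_0$ at which simultaneously $(\mu_0-\tilde\mu_n)^2\geq c f(n)$ and $(\tilde\mu_n-\hat\mu_n)^2<(c/2)f(n)$. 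The reverse triangle inequality then gives, for each such $n$, $|\mu_0-\hat\mu_n|\geq\sqrt{cf(n)}-\sqrt{(c/2)f(n)}$, whence $(\mu_0-\hat\mu_n)^2\geq c'f(n)$ with $c':=(1-1/\sqrt 2)^2 c>0$.

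\textbf{Stopping-time construction and conclusion.} For $N\geq N_0$ and $T>N$ set
\[
\tau_{N,T} \;=\; \inf\bigl\{n:\,N<n\leq T,\ (\mu_0-\hat\mu_n)^2\geq c' f(n)\bigr\}\,\wedge\,T,
\]
which is an a.s.\ finite element of $\TT$ because $\mu_0\in\RR$ is a fixed constant and $\hat\mu_n$ is $\FF_n$-measurable. Since $(\mu_0-\hat\mu_{\tau_{N,T}})^2/f(\tau_{N,T})\geq c'\,\indicator\{\tau_{N,T}<T\}$, monotone convergence in $T$ gives
\[
\sup_{\tau\in\TT}\EE_{\mu_0}\!\left[\frac{(\mu_0-\hat\mu_\tau)^2}{f(\tau)}\right]\geq c'\,\PP_{\mu_0}\bigl(\exists n>N:(\mu_0-\hat\mu_n)^2\geq c'f(n)\bigr)\geq c'\,\PP_{\mu_0}(E)\geq c'\delta,
\]
proving (\ref{eq:friday0}). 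For (\ref{eq:friday}), given non-increasing $g$ with $f(n)/g(n)\to\infty$ and any $M>0$, choose $N_M\geq N_0$ with $f(n)/g(n)>M$ for $n>N_M$; the identical argument with $g(\tau_{N_M,T})$ in the denominator, using $1/g(n)\geq M/f(n)$ on the stopped event, yields $\sup_\tau\EE_{\mu_0}[(\mu_0-\hat\mu_\tau)^2/g(\tau)]\geq c'\delta M$, and arbitrariness of $M$ forces the supremum to equal $+\infty$.

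\textbf{Main obstacle.} The only substantive step is the triangle-inequality synthesis in the second paragraph, which relies crucially on the factor $1/2$ hardwired into the definition of $\cG_n$: it is this choice that ensures ``$\hat\mu_n$ close to $\tilde\mu_n$'' together with ``$\tilde\mu_n$ far from $\mu_0$'' forces $\hat\mu_n$ to be far from $\mu_0$ at the same $\sqrt{f(n)}$ scale. Everything else is bookkeeping: measurability and a.s.\ finiteness of $\tau_{N,T}$ (immediate because $\mu_0$ enters as a constant and the infimum is truncated at $T$), and the monotone limit $T\to\infty$. The construction essentially uses Nature's freedom, under the weakly-adversarial definition, to pick a stopping rule adapted to the data while knowing the true $\mu_0$.
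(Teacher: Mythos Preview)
Your proof is correct and follows essentially the same approach as the paper's: both combine Lemma~\ref{lem:lil} with the triangle inequality on the event $\cF_{\mu,n}\cap\bar{\cG}_n$, then build an a.s.\ finite stopping time (depending on the fixed $\mu_0$) that lands on such an $n$ with positive probability arbitrarily far out. The only cosmetic differences are that you frame it as a contrapositive and compute the constant $c'=(1-1/\sqrt2)^2c$ explicitly, whereas the paper splits into the two cases $\PP_\mu[\cF_{\mu,n}\cap\bar{\cG}_n\ \text{i.o.}]>0$ versus $=0$ and is looser about the constant.
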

\begin{proof}
Fix arbitrary $\mu$ and let $\bar{\cG}_n$ be the complement of
$\cG_n$. We consider two cases, depending on whether
$\PP_{\mu}[\cF_{\mu, n} \cap \bar{\cG}_n \ \text{\rm i.o. }] > 0$ or not. 

\smallskip
\noindent{\bf Case 1:} $\PP_{\mu}[\cF_{\mu, n} \cap \bar{\cG}_n \ \text{\rm i.o. }] > 0$. \\
In this case, there exists $\epsilon > 0$ such that for all $n_0 \in \naturals$ there is an $n_1 > n_0$ such that
$$
\PP_{\mu}\left[ \cF_{\mu, n} \cap \bar{\cG}_n \ \text{holds for some $n$ with $n_0 < n < n_1$ }\right] \geq \epsilon. 
$$
Define the stopping time $\tau = \min \{n_1, n \}$ with $n_1$ as above and $n$ the smallest $n > n_0$ such that $\cF_{\mu, n} \cap \cG_n$ holds. Then $\tau$ is finite and we must have
 $$
 {\EE}_{X^\infty\sim \PP_{\mu}} \left[ \frac{(\hat\mu(X^{\tau}) - \mu)^2}{g(\tau)} \right]
 = 
 {\EE}\left[ \frac{(\hat\mu(X^{\tau}) - \mu)^2}{f(\tau)} \cdot
 \frac{f(\tau)}{g(\tau)} \right]
 \geq \epsilon \cdot \frac{1}{2} c \cdot \min_{n_0 < n < n_1} \frac{f(n)}{g(n)}. 
 $$
Since $f(n)/g(n) \rightarrow \infty$, and we can take $n_0$ as large as we like, it follows that in this case (\ref{eq:friday}) holds. Similarly, by taking $g=f$ we get (\ref{eq:friday0}).

\smallskip
\noindent{\bf Case 2:}
$P_{\mu}(\cF_{\mu, n} \cap \bar{\cG}_n \ \text{\rm i.o.}) = 0$. \\
This statement, together with (\ref{eq:physics}) directly implies (\ref{eq:sunday}); this finishes the proof the lemma. 
\end{proof}

\subsubsection{Translating the problem into bounding Bayes risk}
\label{br}
First, we state a general measure-theoretic result on the existence of the joint density of $(\tau, X^\tau)$. The proof can be found in the \hyperref[appn]{Appendix}.
\begin{prop}
\label{thm:bayes}
Let $\{\PP_\mu, \mu\in M\subseteq \RR\}$ be a set of probability measures on some space $(\XX, \BB, \nu)$ such that for all $\mu\in M$ there exists a conditional density function $p(\cdot\given \mu):\XX\to\RR^+$ with respect to $\nu$. For each $\mu\in M$ consider $X^n=(X_1, \dots, X_n)$ to be a vector of i.i.d.\ random variables distributed according to $\PP_\mu$. Let $\tau$ be an a.s.-finite stopping time with respect to filtration $\FF=(\FF_n=\sigma_\mu(X_1, \dots, X_n))_{n\in\NN}$. Let $Y=(\tau, X^\tau)$ be a random variable taking values in $\YY=\bigcup\limits_{n=1}^\infty \{n\}\times \XX^n$. Then there exists a $\sigma$-algebra $\Sigma$ and measure $\nu_\YY$ on $\YY$ such that for each $\mu \in M$ rv $Y$ is $\Sigma$-measurable and has a density function $p(\cdot\given\mu):\YY\to\RR^+$ with respect to $\nu_\YY$. 
\end{prop}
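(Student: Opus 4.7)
The natural strategy is to realize $\YY = \bigcup_{n=1}^\infty \{n\}\times\XX^n$ as a disjoint union of measurable spaces and equip it with the direct-sum $\sigma$-algebra and measure. Specifically, I would define
\[
\Sigma = \bigl\{A \subseteq \YY : A \cap (\{n\}\times \XX^n) = \{n\}\times A_n \text{ with } A_n \in \BB^{\otimes n} \text{ for every } n\in\NN\bigr\},
\]
and set $\nu_\YY(A) = \sum_{n=1}^\infty \nu^{\otimes n}(A_n)$ for $A\in\Sigma$. Standard verifications show $\Sigma$ is a $\sigma$-algebra and $\nu_\YY$ is a $\sigma$-finite measure on $(\YY,\Sigma)$.

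\textbf{Measurability of $Y$.} For $A\in\Sigma$, decompose
\[
Y^{-1}(A) = \bigcup_{n=1}^\infty \bigl(\{\tau = n\} \cap (X_1,\dots,X_n)^{-1}(A_n)\bigr).
\]
Since $\tau$ is a stopping time, $\{\tau=n\}\in\FF_n$, and since each $X_i$ is $\FF_n$-measurable for $i\le n$, the set $(X_1,\dots,X_n)^{-1}(A_n)\in\FF_n\subseteq\A$. A countable union of sets in $\A$ lies in $\A$, so $Y$ is $(\A,\Sigma)$-measurable. The fact that $\tau$ is a.s.\ finite ensures that $Y$ is well-defined $\PP_\mu$-a.s.\ for each $\mu$, which is all we need.

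\textbf{Density identification.} Because $\{\tau=n\}\in\FF_n=\sigma(X_1,\dots,X_n)$, a standard factorization lemma yields a measurable indicator $g_n:\XX^n\to\{0,1\}$ with $\indicator_{\{\tau=n\}} = g_n(X_1,\dots,X_n)$ $\PP_\mu$-a.s.\ (the set $\{\tau=n\}$ corresponds to a Borel set in $\XX^n$, and this correspondence does not depend on $\mu$ since it is built from the $X_i$'s alone). For any $A\in\Sigma$ we then compute, using Fubini and the i.i.d.\ product structure,
\[
\PP_\mu(Y\in A) = \sum_{n=1}^\infty \EE_\mu\bigl[g_n(X^n)\indicator_{A_n}(X^n)\bigr] = \sum_{n=1}^\infty \int_{A_n} g_n(x^n)\prod_{i=1}^n p(x_i\given\mu)\, d\nu^{\otimes n}(x^n).
\]
By definition of $\nu_\YY$, this says precisely that $Y$ has density
\[
p\bigl((n,x^n)\given\mu\bigr) = g_n(x^n)\prod_{i=1}^n p(x_i\given\mu)
\]
with respect to $\nu_\YY$, as required.

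\textbf{Main obstacle.} The only delicate point is the factorization $\indicator_{\{\tau=n\}} = g_n(X^n)$: one must argue that the choice of $g_n$ can be made so that the resulting density $p(\cdot\given\mu)$ is jointly measurable in the appropriate sense and does not depend on $\mu$. This follows because $\{\tau=n\}$ lies in $\sigma(X^n)$ as an abstract set (the filtration is generated by the $X_i$'s regardless of the law $\PP_\mu$), so the Doob--Dynkin lemma provides a single Borel $g_n$ that works simultaneously for all $\mu\in M$. Everything else is routine bookkeeping about direct sums of measure spaces.
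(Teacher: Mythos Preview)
Your proof is correct and sets up $(\YY,\Sigma,\nu_\YY)$ exactly as the paper does: both take $\Sigma$ to be the direct-sum $\sigma$-algebra over the slices $\{n\}\times\XX^n$ and $\nu_\YY$ to be the sum of the product measures $\nu^{\otimes n}$. The difference is in how the density is produced. The paper argues abstractly: it verifies that the law of $Y$ under $\PP_\mu$ is absolutely continuous with respect to $\nu_\YY$ (by checking that $\nu_\YY(A)=0$ forces $\PP_\mu(Y\in A)=0$) and then invokes the Radon--Nikodym theorem to obtain \emph{some} density. You instead construct the density explicitly via the Doob--Dynkin factorization $\indicator_{\{\tau=n\}}=g_n(X^n)$, arriving at the concrete formula
\[
p\bigl((n,x^n)\mid\mu\bigr)=g_n(x^n)\prod_{i=1}^n p(x_i\mid\mu).
\]
Your route is more informative: one sees exactly what the density is, and in particular that the stopping-time factor $g_n$ is common to all $\mu\in M$, which is precisely what is needed downstream for the Bayes-rule manipulation in Corollary~\ref{thm:b}. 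It also sidesteps having to verify absolute continuity on arbitrary (non-rectangular) measurable sets $A[n]\in\BB^{\otimes n}$, which is the step where the paper's argument is most compressed. The paper's Radon--Nikodym route is shorter to state but less transparent about the structure of the resulting density.
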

Then, using Bayes' theorem (see e.g. Theorem 1. 16 in \cite{schervish}) we get the following. 
\begin{cor}
\label{thm:b}
Consider any prior $W$ on $\RR$ with density $w(\mu)$. Let $W_{y}(\cdot\given y)$ denote the conditional distribution of $\mu$ given $Y=y$. Then, $W_y\ll W$ and there exists a conditional density $w(\mu\given n, x^n)$ such that for any $Y=(n, x^n)$
\[
w(\mu\given n, x^n)=\frac{w(\mu)p(n, x^n\given\mu)}{\int_M p(n, x^n\given\tilde\mu)w(\tilde\mu)d\tilde\mu}. 
\]
\end{cor}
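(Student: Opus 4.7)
The plan is to combine Proposition \ref{thm:bayes} with a standard parametric version of Bayes' theorem (the one cited from \cite{schervish}). The point of Proposition \ref{thm:bayes} is precisely to turn the stopped experiment into a dominated family: it supplies a single $\sigma$-finite measure $\nu_\YY$ on $(\YY, \Sigma)$ together with, for each $\mu \in M$, a version $p(\cdot \given \mu)$ of $dP_\mu^Y / d\nu_\YY$. Once this is in hand, the situation is structurally identical to a classical i.i.d. parametric problem with sample space $\YY$ and densities $p(y \given \mu)$, and the usual Bayes formula can be invoked as a black box.

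I would set up the argument in three short steps. First, form the joint law of $(\mu, Y)$ on $M \times \YY$ as the semidirect product of $W$ and the transition kernel $\mu \mapsto p(\cdot \given \mu)\, \nu_\YY$; this joint measure is absolutely continuous with respect to $(\text{Leb}\!\restriction_M) \otimes \nu_\YY$ with density $w(\mu)\, p(y \given \mu)$. Second, define the marginal $m(y) = \int_M p(y \given \tilde\mu) w(\tilde\mu)\, d\tilde\mu$ and let $N = \{y : m(y) = 0\}$; by Tonelli, $N$ is $P^Y$-null for every $\mu$ in the support of $W$, so we may redefine the posterior arbitrarily on $N$. Third, apply Theorem 1.16 of \cite{schervish}: it yields a regular conditional distribution $W_y$ of $\mu$ given $Y = y$ whose Radon--Nikodym derivative with respect to $W$ is $p(y \given \mu)/m(y)$, which is exactly the displayed formula. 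The statement $W_y \ll W$ is immediate from the fact that this derivative is a nonnegative function of $\mu$ that integrates to one.

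The only technical point I would have to be careful with is joint measurability of $(\mu, y) \mapsto p(y \given \mu)$ on the product $\sigma$-algebra, which is needed both for Fubini on the marginal $m$ and for Schervish's theorem to apply. This should come for free from the construction underlying Proposition \ref{thm:bayes}: on the slice $\{\tau = n\} \subset \YY$ the density is essentially $\prod_{i=1}^n p(x_i \given \mu)$ restricted to an $\FF_n$-measurable set, and $(x, \mu) \mapsto p(x \given \mu)$ is jointly measurable by hypothesis, so gluing across $n$ preserves joint measurability. The other potential obstacle, ensuring that $W_y$ can be chosen as a \emph{regular} conditional distribution, is handled by Schervish's theorem under the standing assumption that the family is dominated, so no extra work is needed.
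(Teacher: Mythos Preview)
Your proposal is correct and follows exactly the approach in the paper: the corollary is presented there as an immediate consequence of Proposition~\ref{thm:bayes} combined with Bayes' theorem (Theorem~1.16 in \cite{schervish}), with no further argument given. Your write-up simply spells out the details (joint law, marginal, null set, joint measurability) that the paper leaves implicit in the citation.
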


Notice that for any estimator $\hat\mu_n=\hat\mu(X^n)$, stopping time $\tau^\star\in \TT$, prior $W$ on $\RR$, and a function $g:\NN\to\RR^+$
we have
\begin{equation}
\label{eq:sup}
\sup_{ \mu\in \RR}\sup_{\tau\in\TT}\EE_{X^\infty\sim\PP_ \mu}\br{\frac{( \mu-\hat \mu_{\tau})^2}{g(\tau)}}\geq
\EE_{ \mu\sim W}\EE_{X^\infty\sim\PP_ \mu}\br{\frac{(\mu-\hat \mu_{\tau^\star})^2}{g(\tau^\star)}} . 
\end{equation}

Denote $Y = (\tau^\star, X^{\tau^\star})\in\YY$ and $h(\mu, Y)= \frac{(\mu-\hat \mu_{\tau^\star})^2}{g(\tau^\star)}$. Let $w( \mu)$ be the density function of the prior $W=\NNN(0, 1)$. Then using the results of Corollary \ref{thm:b} we have
\begin{align*}
\EE_{\mu\sim W}\EE_{X^\infty\sim\PP_\mu} \br{h(\mu, Y)}=&
\int_{\RR}\int_\YY h(\mu, Y) p(Y\given\mu) w(\mu)dYd\mu=\\
=&\int_{\RR}\int_\YY h(\mu, Y) w(\mu\given Y)\int_{\RR} p(y\given\mu^\star)w(\mu^\star)d\mu^\star dYd\mu=\\
=&\int_{\RR}\int_\YY\int_\RR h(\mu, Y) w(\mu\given Y)d\mu p(y\given\mu^\star)dY w(\mu^\star)d\mu^\star=\\
=&\EE_{\mu^\star\sim W}\EE_{Y\sim \PP_{\mu^\star} }{\EE}_{\mu \sim W \mid Y} \br{h(\mu, Y)}=\\
=&\EE_{Y\sim\bar P} {\EE}_{\mu \sim W \mid Y} \br{h(\mu, Y)}, 
\end{align*}
where $\bar{P}$ is the Bayes marginal distribution based on the prior $W$.

Therefore, in order to prove the theorem we only need to show that for all estimators $\hat\mu$ that satisfy \eqref{eq:sunday} there exists a stopping time $\tau^\star$ such that for some $C>0$ (which will depend on $c$)
\smallskip

\noindent(i) for a function $f(n)=n^{-1}\log\log n$ (with $f(n)=1$ when $n=1, 2$)
\[
{\EE}_{Y \sim \bar{P}} {\EE}_{ \mu \sim W \mid Y} \br{\frac{(\mu-\hat \mu_{\tau^\star})^2}{f(\tau)}}\geq C. 
\]
(ii) for all non-increasing functions $g:\NN\to\RR^+$ such that $f(n)/g(n)\to\infty$
\[
{\EE}_{Y \sim \bar{P}} {\EE}_{ \mu \sim W \mid Y} \br{\frac{( \mu-\hat \mu_{\tau^\star})^2}{g(\tau)}}=\infty. 
\]

\subsubsection{Defining the suitable stopping time}
\label{sec:stop}
For a fixed estimator $\hat\mu$ that satisfies \eqref{eq:sunday} and a fixed $n_0>0$ define the stopping time $\tau^\star$ as 
\[
\tau^\star=\min\{n\in \RR: n> n_0 \text{ and } \cG_n \text{ holds}\}. 
\]
By (\ref{eq:sunday}) this stopping time is $P_{\mu}$-a.s. finite for all $\mu$. Let $Y=(\tau^\star, X^{\tau^\star})$. For every $Y=(n, x^n)$ we have

\begin{align*}
 {\EE}_{\mu \sim W \mid Y=(n, x^n)} \br{h(\mu, Y)} &= {\EE}_{\mu \sim W \mid X^n = x^n} \left[
 \frac{(\mu - \hat \mu_n)^2}{g(n)} 
 \right] = \\ &= {\EE}_{\mu \sim W \mid X^n = x^n} \left[
 \frac{(\mu - \tilde{\mu}_n)^2 + (\tilde{\mu}_n - \hat\mu_n)^2}{g(n)} 
 \right]\geq \\ 
 &\geq \frac{(\tilde\mu_n - \hat\mu_n)^2}{g(n)}.
 \end{align*}
The first equality holds due to the event $\{\tau=n\}$ be completely determined by the event $\{X^n=x^n\}$. The second equality is due to $\tilde{\mu}_n=\tilde{\mu}(X^n)$ being the posterior mean given $X^n$ based on prior $W$. 
Furthermore, by definition of $\tau^\star$, the vector $X^{\tau^\star}$ satisfies 
\[
{(\tilde\mu(X^{\tau^\star}) - \hat\mu(X^{\tau^\star}))^2}\geq (c/2) \cdot f(\tau^\star).
\] 
Then for every $Y=(n, x^n)$
$$
 {\EE}_{\mu \sim W \mid Y=(n, x^n)} \left[
 \frac{(\mu - \hat \mu_n)^2}{g(n)} 
 \right] \geq\frac c2 \cdot \min_{n > n_0} \frac{f(n)}{g(n)}. 
$$
Since we can choose $n_0$ arbitrarily large and $f(n)/g(n) \rightarrow \infty$, the desired results follows.

\begin{appendix}

\section*{Remaining proofs}\label{appn}
\noindent {\bf Proof of Lemma \ref{main}: $k>1$}

Let $\gamma_i=\frac1{i(i+1)}$, $\eta^i=c_0\sqrt{\frac{-\log\gamma_i}{e^i}}$ for some constant $c_0>0$ such that $(\eta^i)^2\leq\frac\delta k$ for all $i=1, \dots, \infty$. The above holds for all positive $c_0<\delta/k.$

Furthermore, let $P=\{\rho=(\rho_1, \dots, \rho_k)\in\{-1, 1\}^k, \text{ such that } \rho_1=1\}$. Notice that $|P|=2^{k-1}$. For a fixed $\rho\in P$ consider a discrete probability measure on $B_\delta(0)$ with density
\[
\gamma(\eta)=\sum_{i\in\NN} \gamma_i \ind{\eta=\eta_{i, \rho}}
\] 
with
$
\eta_{i, \rho}=(\eta^i\rho_1, \dots, \eta^i\rho_k)^T. 
$
Then, since $\eta_{i, \rho}^T\eta_{i, \rho}=k(\eta^i)^2$ we have
\[
Z_n=\sum_{i=1}^{\infty}\gamma_ie^{\eta_{i, \rho}^T S_n-n\sigma\eta_{i, \rho}^T\eta_{i, \rho}/2}\geq \max_{i\in\NN} \gamma_i e^{\eta^i T_{n, \rho}-n\sigma k(\eta^i)^2/2}, 
\]
where $T_{n, \rho}=S_n^1\rho_1+\dots+S_n^k\rho_k. $ Now we apply the argument from the one-dimensional case and get for any $\rho\in P$
\[
\EE \left[\sup_{n>27}e^{\frac{c_0\sqrt2}{4}\log\log n \left(\frac{|T_{n , \rho}|}{\sqrt{n \log\log n }}- \frac{\sqrt{2}}{2c_0}\left(9\sigma k c_0^2+6\right) \right)}\right]\leq 4. 
\]
Notice that $T_n=|S_n^1|+\dots+|S_n^k|=\max_{\rho\in P} |T_{n, \rho}|. $ Since $|P|=2^{k-1}$, we have
\begin{multline*}
\EE \left[\sup_{n>27} e^{\frac{c_0\sqrt2}{4}\log\log n \left(\frac{T_n }{\sqrt{n \log\log n }}- \frac{\sqrt{2}}{2c_0}\left(9\sigma k c_0^2+6\right) \right)}\right]=\\
=\EE \left[\sup_{n>27}\max_{\rho\in P} e^{\frac{c_0\sqrt2}{4}\log\log n \left(\frac{|T_{n , \rho}|}{\sqrt{n \log\log n }}- \frac{\sqrt{2}}{2 c_0}\left(9\sigma k c_0^2+6\right) \right)}\right]\leq\\
\leq\sum_{\rho\in P}\EE\left[\sup_{n>27} e^{\frac{ c_0\sqrt2}{4}\log\log n \left(\frac{|T_{n , \rho}|}{\sqrt{n \log\log n }}- \frac{\sqrt{2}}{2 c_0}\left(9\sigma k c_0^2+6\right) \right)}\right]\leq 2^{k+1}. 
\end{multline*}
The desired result follows by taking $c=\frac {c_0\sqrt2}{4}$ and the fact that $\log\log n >1$, when $n>27$. 

\medskip
\noindent {\bf Proof of Proposition \ref{thm:bayes}}

Consider $\YY=\bigcup\limits_{n=1}^\infty \{n\}\times \XX^n$. For each $A\subset\YY$ let
\[
A[n]=\{a\in\XX^n: (n, a)\in A\}. 
\]
We endow $\YY$ with a $\sigma$-algebra $\Sigma$ by setting $A\in\Sigma$, iff $A[n]\in\BB^n$. Here $\BB^n$ is a product $\sigma$-algebra on $\XX^n$. Clearly $\Sigma$ is a $\sigma$-algebra. For $A\in\Sigma$ let
\[
\nu_\YY(A)=\sum_{n=1}^\infty \nu_n(A[n]), 
\]
where $\nu_n=\nu^{\otimes n}$. Notice that $\nu$ is a $\sigma$-finite measure on $\YY$. 

Consider any $\mu\in M. $ For $A\in\Sigma$ let $\PP_{Y}[A\given \mu]=\PP\left[(\tau, X^\tau)\in A\given\mu\right]$. Notice that $\PP_Y[\emptyset\given \mu]=0$, $\PP_Y[\YY\given \mu]=1$, and $\PP_Y[A\given \mu]\in[0, 1]$ for every $A\in\Sigma$. Also, for a any countable collection $A_i\in\Sigma$ of pairwise disjoints sets we have
\[
\PP_Y\left[\bigcup_{i=1}^\infty A_i\given \mu\right]=\sum\PP_Y[A_i\given \mu]. 
\] 
Therefore $\PP$ is a probability measure. Furthermore, consider $A\in\Sigma$ such that $\nu_\YY(A)=0. $ Denote $A[n]=(A^1[n], \dots, A^n[n])$, where $A^i[n]\subset\XX$. Then
\[
\nu_\YY(A)=\sum_{n=1}^\infty\prod_{i=1}^n \nu(A^i[n])=0. 
\]
Thus, $\nu(A^i[n])=0$ for all $n\in\NN$ and $i\in\{1, \dots, n\}$. Since $\PP$ is absolutely continues with respect to $\nu$, we have $\PP[A^i[n]]=0$. Therefore, for each $n$
\[
\PP_Y[n, A[n]\given \mu]=\PP[\tau=n, X^n\in A[n]\given \mu]\leq\PP[X^n\in A[n]\given \mu]=\prod_{i=1}^n\PP[X_i\in A^i[n]\given \mu]=0. 
\]
Then $\PP_Y[A\given \mu]=\sum_{n=1}^\infty \PP_Y[n, A[n]\given \mu]=0. $ Therefore, $\PP_Y[\cdot\given\mu]$ is absolutely continuous with respect to $\nu_\YY$ for every $\mu\in M$. By the Radon-Nikodym theorem there exists a density $p_Y(n, x^n\given \mu)$ with respect to measure $\nu_\YY$. 

\end{appendix}
\section*{Acknowledgements}
This work is part of the research program {\it Safe Bayesian Inference} with project number 617. 001. 651 which is financed by the Dutch Research Council (NWO). The project leading to this work has received funding from the European Research Council (ERC) under the European Union’s Horizon 2020 research and innovation programme (grant agreement No 834175). We thank Wouter Koolen for several useful conversations.

\bibliographystyle{imsart-nameyear} 
\bibliography{references}

\end{document}